\setlist[itemize]{topsep=0ex,itemsep=0ex,parsep=0ex}
\setlist[enumerate]{topsep=0ex,itemsep=0ex,parsep=0ex}
\crefname{lem}{Lemma}{Lemmas}
\crefname{thm}{Theorem}{Theorems}
\crefname{cor}{Corollary}{Corollaries}
\newcommand{\defn}[1]{\textcolor{Maroon}{\emph{#1}}}
\def\NAT@spacechar{~}
\renewcommand{\baselinestretch}{1.1}
\DeclarePairedDelimiter{\ceil}{\lceil}{\rceil}
\renewcommand{\epsilon}{\varepsilon}
\renewcommand{\emptyset}{\varnothing}
\renewcommand{\geq}{\geqslant}
\renewcommand{\leq}{\leqslant}
\DeclareMathOperator{\polylog}{polylog}
\DeclareMathOperator{\dist}{dist}
\DeclareMathOperator{\sreach}{reach}
\DeclareMathOperator{\scol}{col}
\DeclareMathOperator{\eg}{eg}
\DeclareMathOperator{\ltw}{ltw}
\DeclareMathOperator{\rtw}{rtw}
\DeclareMathOperator{\tw}{tw}
\newcommand{\minorof}{\preccurlyeq}
\newcommand{\GG}{\mathcal{G}}
\newcommand{\TT}{\mathcal{T}}
\renewcommand{\thefootnote}{\fnsymbol{footnote}}
\theoremstyle{plain}
\newtheorem{thm}{Theorem}
\newtheorem{lem}[thm]{Lemma}
\newtheorem{cor}[thm]{Corollary}
\newtheorem{prop}[thm]{Proposition}
\crefname{obs}{Observation}{Observations}
\newtheorem*{lem*}{Lemma}
\theoremstyle{definition}
\newtheorem*{conj*}{Conjecture}
\begin{document}
\title{\bf\boldmath\fontsize{18pt}{18pt}\selectfont
Expansion of Gap-Planar Graphs}

\author{David~R.~Wood\,\footnotemark[2]}


\maketitle

\begin{abstract}
A graph is $k$-gap-planar if it has a drawing in the plane such that every crossing can be charged to one of the two edges involved so that at most $k$ crossings are charged to each edge. We show this class of graphs has linear expansion. In particular, every $r$-shallow minor of a $k$-gap-planar graph has density $O(rk)$. Several extensions of this result are proved: for topological minors,  for $k$-cover-planar graphs, for $k$-gap-cover-planar graphs, and for drawings on any surface. Application to graph colouring are presented.
\end{abstract}

\footnotetext[2]{School of Mathematics, Monash University, Melbourne, Australia (\texttt{david.wood@monash.edu}). Research supported by the Australian Research Council and by NSERC. }

\renewcommand{\thefootnote}{\arabic{footnote}}

\section{Introduction}

Beyond planar graphs is a field of research that studies drawings\footnote{A \defn{drawing} of a graph $G$ represents each vertex of $G$ by a distinct point in the plane, and represents each edge $vw$ of $G$ by a non-self-intersecting curve between $v$ and $w$, such that no three edges cross at a single point.}  of graphs\footnote{We consider simple, finite, undirected graphs $G$ with vertex-set $V(G)$ and edge-set $E(G)$. A graph $H$ is a \defn{minor} of a graph $G$, written \defn{$H\minorof G$}, if a graph isomorphic to $H$ can be obtained from $G$ by vertex deletions, edge deletions, and edge contractions. 
If $H$ is not a minor of $G$, then $G$ is \defn{$H$-minor-free}. 
A \defn{class} is a collection of graphs, closed under isomorphism. 
A class $\GG$ of graphs is \defn{monotone} if for every graph $G\in\GG$, every subgraph of $G$ is in $\GG$. 
A class $\GG$ of graphs is \defn{hereditary} if for every graph $G\in\GG$, every induced subgraph of $G$ is in $\GG$. 
A class $\GG$ of graphs is \defn{minor-closed} if for every graph $G\in\GG$, every minor of $G$ is in $\GG$.}
 in which the crossings are controlled in some way; see \citep{DLM19,Hong20} for surveys. This paper studies the graph-theoretic structure of certain beyond planar graph classes. As a warm-up, consider the following classical example. For an integer $k\geq 0$, a drawing of a graph $G$ is \defn{$k$-planar} if each edge is in at most $k$ crossings; see \cref{K6} for an example. A graph is \defn{$k$-planar} if it has a $k$-planar drawing. Such graphs share many structural properties of planar graphs. For example, $k$-planar graphs on $n$ vertices have treewidth\footnote{For a non-empty tree $T$, a \defn{$T$-decomposition} of a graph $G$ is a collection $(B_x:x \in V(T))$ such that (a) $B_x\subseteq V(G)$ for each $x\in V(T)$, (b) for each edge ${vw \in E(G)}$, there exists a node ${x \in V(T)}$ with ${v,w \in B_x}$, and (c) for each vertex ${v \in V(G)}$, the set $\{ x \in V(T) : v \in B_x \}$ induces a non-empty (connected) subtree of $T$. The \defn{width} of such a $T$-decomposition is ${\max\{ |B_x| : x \in V(T) \}-1}$. A \defn{tree-decomposition} is a $T$-decomposition for any tree $T$. The \defn{treewidth} of a graph $G$, denoted \defn{$\tw(G)$}, is the minimum width of a tree-decomposition of $G$. Treewidth is the standard measure of how similar a graph is to a tree; see \citep{Reed97,Bodlaender98,HW17} for surveys on treewidth. Note that if $\tw(G)\leq k$ then $G$ is $K_{k+2}$-minor-free.} $O((kn)^{1/2})$ \citep{GB07,DEW17}, generalising the Lipton-Tarjan balanced separator theorem, which essentially says that $n$-vertex planar graphs have treewidth $O(n^{1/2})$. More generally, for fixed $k\geq 0$, the class of $k$-planar graphs has bounded layered treewidth~\citep{DEW17} and bounded row treewidth~\citep{DMW17,HW24}\footnote{A \defn{layering} of a graph $G$ is a partition $(V_1,\dots,V_n)$ of $V(G)$ such that for each edge $vw\in E(G)$, if $v\in V_i$ and $w\in V_j$ then $|i-j|\leq 1$. The \defn{layered treewidth} of a graph $G$. denoted \defn{$\ltw(G)$}, is the minimum integer $k$ such that $G$ has a tree-decomposition $(B_x:x\in V(T))$ and a layering $(V_1,\dots,V_n)$ where $|B_x\cap V_i|\leq k$ for each $x\in V(T)$ and $i\in\{1,\dots,n\}$. Every planar graph has layered treewidth at most $3$ \cite{DMW13}, and every $k$-planar graph has layered treewidth at most $6(k+1)$ \citep{DEW17}. The \defn{row treewidth} of a graph $G$, denoted \defn{$\rtw(G)$}, is the minimum treewidth of a graph $H$ such that $G$ is isomorphic to a subgraph of $H\boxtimes P$, where $\boxtimes$ is the strong product and $P$ is a path. It is easily seen that $\ltw(G)\leq\rtw(G)+1$. Every planar graph has row treewidth at most $6$ \cite{DJMMUW20,UWY22}, and every $k$-planar graph has row treewidth at most $O(k^5)$ \citep{DMW23}. Note that if $G$ has radius at most $r$, then $\tw(G)\leq (2r+1)\ltw(G)-1$. }.

\begin{figure}[!h]
    \centering
    \includegraphics[width=0.4\linewidth]{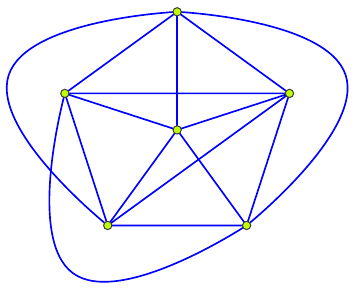}
    \caption{1-planar drawing of $K_6$}
    \label{K6}
\end{figure}

This paper focuses on the following more general beyond planar graph class. For an integer $k\geq 0$, a drawing of a graph $G$ is \defn{$k$-gap-planar} if every crossing can be charged to one of the two edges involved so that at most $k$ crossings are charged to each edge. A graph is \defn{$k$-gap-planar} if it has a $k$-gap-planar drawing.  This definition is due to \citet{GapPlanar18}. Similar definitions were independently introduced by various other authors around the same time\footnote{The \defn{crossing graph} of a drawing of a graph $G$ is the graph with vertex set $E(G)$, where two vertices are adjacent if the corresponding edges in $G$ cross. A graph $G$ is $k$-gap-planar if and only if  $G$  has a drawing in the plane such that the crossing graph has an orientation with outdegree at most $k$ at every vertex. \citet{Hakimi65} proved that any graph $H$ has such an orientation if and only if every subgraph of $H$ has average degree at most $2k$. So a graph $G$ is $k$-gap-planar if and only if $G$ has a drawing such that every subgraph of the crossing graph has average degree at most $2k$ if and only if $G$ has a drawing such that every subgraph $G'$ of $G$ has at most $k\,|E(G')|$ crossings in the induced drawing of $G'$. 
\citet{OOW19} defined a graph $G$ to be \defn{$k$-close to Euler genus $g$} if every subgraph $G'$ of $G$ has a drawing in a surface of Euler genus at most $g$ with at most $k\,|E(G')|$ crossings. The only difference between ``$k$-close to planar'' and ``$k$-gap-planar'' is that a $k$-gap-planar graph has a single drawing in which every subgraph has the desired number of crossings. A graph $H$ is \defn{$d$-degenerate} if every subgraph of $H$ has minimum degree at most $d$, which is equivalent to $H$ having an acyclic edge-orientation such that each vertex has indegree at most $d$. \citet{EG17} defined a graph to be a \defn{$d$-degenerate crossing graph} if it admits a drawing whose crossing graph is $d$-degenerate. Equivalently, $G$ has a drawing in which the crossing graph has an acyclic orientation with outdegree at most $k$ at every vertex. Thus every $k$-degenerate crossing graph is $k$-gap-planar graph, and every $k$-gap-planar graph is a $2k$-degenerate crossing graph.}. 

Various papers~\citep{EG17,HW24,HW22,HIMW24,OOW19} have studied the structure of $k$-gap-planar graphs. The main positive result, which can be concluded from the work of \citet{EG17}, says that $k$-gap-planar $n$-vertex graphs have treewidth $O(k^{3/4}n^{1/2})$ (see \cref{Treewidth}). On the other hand, recent results show that $k$-gap-planar graphs do not share analogous properties to planar graphs or $k$-planar graphs~\citep{HW24,HIMW24}. For example, \citet{HIMW24} constructed $1$-gap-planar graphs with radius $1$ and arbitrarily large complete graph minors, and thus with unbounded treewidth, unbounded layered treewidth, and unbounded row treewidth. This is in sharp contrast to the case of $k$-planar graphs, and shows that $k$-gap-planar graphs are more general than $k$-planar graphs, in the sense that for any $k$ there is a 1-gap-planar graph that is not $k$-planar. These negative results motivate further study of the class of $k$-gap-planar graphs. 

The primary contribution of this paper is to show a new positive result about the structure of $k$-gap-planar graphs. In particular, we show they have linear expansion, which is a strong property in the Graph Sparsity Theory of \citet{Sparsity}. 

To explain this result, several definitions are needed. A \defn{model} of a graph $H$ in a graph $G$ is a function $\mu$ such that:
\begin{itemize}
    \item for each $v \in V(H)$, $\mu(v)$ is a non-empty connected subgraph of $G$,
    \item $\mu(v) \cap \mu(w) = \emptyset$ for all distinct $v, w \in V(H)$, and
    \item for every edge $vw \in E(H)$, there is an edge of $G$ between $\mu(v)$ and $\mu(w)$. 
\end{itemize}
Clearly, $H$ is a minor of $G$ if and only if there is a model of $H$ in $G$. For an integer $r\geq 0$, a model $\mu$ of a graph $H$ in a graph $G$ is \defn{$r$-shallow} if $\mu(v)$ has radius at most $r$, for each $v\in V(H)$. A graph $H$ is an $r$-shallow minor of a graph $G$, written \defn{$H\minorof_r G$}, if there is an $r$-shallow model of $H$ in $G$. The \defn{density} of a graph $G$ is $|E(G)|/|V(G)|$ if $V(G)\neq\emptyset$, and is 0 if $V(G)=\emptyset$. For a graph $G$, let \defn{$\nabla_r(G)$} be the maximum density of an $r$-shallow minor of $G$. A graph class $\mathcal{G}$ has \defn{bounded expansion} if there is a function $f$ such that $\nabla_r(G)\leq f(r)$ for every graph $G\in\mathcal{G}$ and integer $r\geq 0$. Often the magnitude of such a function $f$ matters. A graph class $\mathcal{G}$ has \defn{polynomial expansion} if there exists  $c\in\mathbb{R}$ such that $\nabla_r(G)\leq c(r+1)^c$ for every graph $G\in\mathcal{G}$ and integer $r\geq 0$. As an illustrative example, the class of graphs with maximum degree at most 3 has bounded expansion (with expansion function $f(r)\in O(2^r)$), but does not have polynomial expansion. A graph class $\mathcal{G}$ has \defn{linear expansion} if there exists  $c\in\mathbb{R}$ such that $\nabla_r(G)\leq c(r+1)$ for every graph $G\in\mathcal{G}$ and integer $r\geq 0$. As another illustrative example, the class of 3-dimensional grid graphs has polynomial expansion (with expansion function $f(r)\in O(r^2)$), but does not have linear expansion. 

What can be said about the expansion of $k$-gap-planar graphs? It can be concluded from the above-mentioned treewidth bound that for each integer $k\geq 0$, the class of $k$-gap-planar graphs has polynomial expansion (see \cref{kGapPlanarNabla}). The first  contribution of this paper improves this result to linear. 

\begin{thm}
\label{LinearExpansion}
For every $k$-gap-planar graph $G$ and integer $r\geq 0$,
 $$\nabla_r(G)\leq 18(k+1)(r+1).$$
\end{thm}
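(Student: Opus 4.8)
The plan is to bound the number of edges in an $r$-shallow minor $H$ of a $k$-gap-planar graph $G$ directly from a $k$-gap-planar drawing of $G$. Fix an $r$-shallow model $\mu$ of $H$ in $G$, and for each $v\in V(H)$ choose a BFS-spanning-tree $T_v$ of $\mu(v)$ rooted at some center $c_v$, so that $T_v$ has depth at most $r$. The first step is to contract each $\mu(v)$ onto its branch set while keeping the drawing: contract the (at most $r$) tree edges on each root-to-leaf path so that $\mu(v)$ becomes a single ``blob'' drawn in a small neighbourhood of the union of the curves of $T_v$. For each edge $vw\in E(H)$ pick a witnessing edge $e_{vw}$ of $G$ between $\mu(v)$ and $\mu(w)$; this gives a drawing of $H$ (after rerouting $e_{vw}$ through the blobs along tree paths) where each vertex $v$ of $H$ is drawn as a curve $\gamma_v$ obtained from the union of the at most $|V(\mu(v))|{-}1$ edges of $T_v$. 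The key point is that each original edge of $G$ is used in this rerouting by at most a bounded number of the $\gamma_v$'s and $e_{vw}$'s — in fact each edge of $G$ lies in at most one $T_v$, and is used as a witnessing edge at most twice (once from each side), so the rerouted drawing of $H$ inherits a crossing structure controlled by that of $G$.

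The core estimate is then a discharging/counting argument: in the rerouted drawing, each ``super-edge'' (the image of an edge $vw$ of $H$, a concatenation of at most $2r+1$ edges of $G$) crosses other super-edges a bounded number of times ``per unit length.'' Concretely, I would count pairs (edge $vw$ of $H$, crossing of $G$ used to witness that $vw$ crosses something), charge each such crossing of $G$ to the edge of $G$ to which it was originally charged in the $k$-gap-planar drawing, and observe that each edge of $G$ receives at most $k$ charges total while appearing in at most $2r+3$ super-edges (bounding its multiplicity across all $\gamma_v$ and witnessing curves). This yields a bound of roughly $k(2r+3)\cdot|E(G)|$-style quantity — but we need it in terms of $|V(H)|$, not $|E(G)|$. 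To get that, apply the same argument to the graph $H'$ obtained from $H$ by adding the contracted tree edges back as a planar-ish part: more cleanly, observe that $H$ itself is drawn so that its crossing graph, with the charging inherited from $G$, has bounded out-degree, hence $H$ is $O(kr)$-gap-planar — and a $k'$-gap-planar graph on $n$ vertices has $O(\sqrt{k'}\,n)$ edges (this follows from the crossing-number characterisation in the footnote, since $\mathrm{cr}(H)\le k'|E(H)|$ forces $|E(H)|=O(\sqrt{k'}|V(H)|)$ by the crossing lemma, unless $H$ is already sparse). Combining $k'=O(kr)$ with $n=|V(H)|$ gives $\nabla_r(G)=O(k^{1/2}r^{1/2})$, which is not yet linear, so a sharper accounting is needed.

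The sharper version, and the route I expect actually works for the stated bound $18(k+1)(r+1)$, is to avoid the crossing lemma and instead use an Euler-type argument on $H$ after a two-step reduction: first pass to a bipartite-like subgraph of $H$ to split into a ``planar part'' and a ``charged part,'' then bound the charged part edge-by-edge. Specifically, I would orient each edge $vw$ of $H$ towards the endpoint whose branch set contributed the charged crossings, so that $E(H)$ splits as $E_0\cup E_1$ where $E_0$ spans a planar graph (edges whose witnessing structure can be realised with no charged crossing, giving $|E_0|\le 3|V(H)|$ by Euler) and $E_1$ consists of edges each of which can be injectively charged to a (crossing, edge-of-$G$) pair. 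Since the relevant edges of $G$ all lie within the radius-$r$ balls $\mu(v)$, each branch set $\mu(v)$ has at most $(2r+1)(\text{edges of }G\text{ it can be charged through})$ — here one uses that the BFS tree $T_v$ has at most ... edges per level and that charges at each edge are capped at $k$ — yielding $|E_1|\le (\text{something like }2(k+1)(2r+1))\cdot\tfrac12|V(H)|$ after summing. Adding the two parts and being generous with constants gives $|E(H)|\le 18(k+1)(r+1)|V(H)|$, i.e. $\nabla_r(G)\le 18(k+1)(r+1)$.

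The main obstacle I anticipate is the bookkeeping that makes the rerouting ``lossless'': showing that after contracting each $\mu(v)$ to a curve $\gamma_v$ and rerouting witnessing edges, the total number of crossings incident to the image of a given edge $vw\in E(H)$ is still controlled by the $k$-gap-planar charging of $G$, with only an $O(r)$ blow-up per edge of $G$ and no double-counting across the many super-edges that may traverse the same region. This is essentially a careful ``contract-and-reroute preserves the crossing structure'' lemma; once it is in place, the discharging argument bounding $|E(H)|$ against $|V(H)|$ should be routine, and the surface and cover-planar generalisations promised in the abstract would follow by substituting the appropriate Euler-genus term for the planar part $|E_0|\le 3|V(H)|$.
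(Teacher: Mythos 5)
There is a genuine gap, and it is precisely the one the paper itself flags as the reason a direct argument fails. Your central claim --- that after contracting each branch set $\mu(v)$ onto a curve $\gamma_v$ and rerouting, each edge of $G$ ``appears in at most $2r+3$ super-edges'' --- is false. An edge $xy$ of the BFS tree $T_v$ near the root $c_v$ is traversed by the rerouted image of \emph{every} edge $vw\in E(H)$ whose witnessing edge attaches to $\mu(v)$ below $y$; the number of such edges is bounded only by $\deg_H(v)$, which is unbounded. Consequently all crossings on $xy$ are inherited by all of these images, the crossing graph of the rerouted drawing contains a large complete bipartite subgraph, and $H$ need not be $k'$-gap-planar for any $k'$ bounded in terms of $k$ and $r$. (If that claim were true, your crossing-lemma fallback would already give density $O(\sqrt{kr})$, which is \emph{stronger} than the stated linear bound --- a sign that something is off. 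The paper does carry out exactly this rerouting for \emph{topological} minors, where subdivision vertices have degree $2$ and each edge of $G$ genuinely lies in at most one super-edge, obtaining $\widetilde{\nabla}_r(G)\leq 8\sqrt{(2r+1)(k+1)}$; it is the branching inside branch sets that kills the argument for general shallow minors.) Your ``sharper version'' does not repair this: the count you need --- that the (crossing, edge-of-$G$) pairs chargeable through a radius-$r$ branch set number $O(kr)$ --- is unsupported, since a branch set of radius $r$ can contain arbitrarily many vertices and edges, and a BFS tree has no bound on the number of edges per level.

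The paper's fix is to replace the count of crossings per edge by a \emph{vertex cover} of the set of edges crossing it (the notion of a $k$-gap-cover-planar drawing). Under contraction of branch sets, a $k$-element cover of each of the at most $2r+1$ edges of $G$ making up the image of $vw\in E(H)$ maps to a $(2r+1)k$-element cover of $vw$ by branch sets (each cover vertex lies in at most one branch set), and this bound is completely unaffected by how many images of edges of $H$ share a given edge of $G$. A probabilistic deletion argument of Pinchasi combined with the Hanani--Tutte theorem then shows that $k'$-gap-cover-planar graphs have density less than $9(k'+1)$, which with $k'=(2r+1)k$ gives the stated bound. Without an idea of this kind --- tracking covers rather than crossing counts --- your outline does not go through.
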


We obtain a similar result for topological minors. For an integer $r\geq 0$, a graph $H$ is an \defn{$r$-shallow topological minor} of a graph $G$ if a $(\leq 2r)$-subdivision of $H$ is a subgraph of $G$. For a graph $G$ and integer $r\geq 0$, let \defn{$\widetilde{\nabla}_r(G)$} be the maximum density of an $r$-shallow topological minor of $G$. 
Every $r$-shallow topological minor is an $r$-shallow minor, so $\widetilde{\nabla}_r(G)\leq\nabla_r(G)$.  Thus, the bound in \cref{LinearExpansion} also applies to  $\widetilde{\nabla}_r(G)$. 
We in fact establish the following improved bound. 

\begin{thm}
\label{LinearTopoExpansion}
For every $k$-gap-planar graph $G$ and integer $r\geq 0$,
 $$\widetilde{\nabla}_r(G)\leq 8\sqrt{(2r+1)(k+1)}.$$
\end{thm}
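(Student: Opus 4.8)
The plan is to bound the number of edges in a $(\leq 2r)$-subdivision $H'$ of a graph $H$ that sits inside a $k$-gap-planar graph $G$, where $H'\subseteq G$. Write $H'$ with $n$ branch vertices and $m$ subdivided edges (so $m$ equals $|E(H)|$ and we want to show $m/n \leq 8\sqrt{(2r+1)(k+1)}$, up to accounting for the vertices on subdivision paths). The key observation is that $G$, being $k$-gap-planar, has a drawing in which every subgraph $G'$ of $G$ has at most $k\,|E(G')|$ crossings in the induced drawing; this is the reformulation stated in the footnote via Hakimi's theorem. In particular, the induced drawing of $H'$ has at most $k\,|E(H')|$ crossings.

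First I would contract each subdivision path of $H'$ to recover a drawing of $H$ (a multigraph drawing, but we may assume $H$ is simple). A subdivision path of length at most $2r+1$ that carries $c$ crossings in the drawing of $H'$ becomes an edge of $H$ carrying $c$ crossings; but each internal vertex of the path has degree $2$, so an edge of $H'$ on that path is in some number of crossings, and summing along the path, the resulting edge of $H$ is in at most (sum of crossings on the $\leq 2r+1$ edges of the path) crossings. Thus $H$ has a drawing with at most $k\,|E(H')|$ crossings total. Now $|E(H')| \leq |E(H)| + (2r)\,|E(H)| = (2r+1)m$ in the worst case (each edge subdivided $2r$ times), so $H$ has a drawing with at most $k(2r+1)m$ crossings. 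The second step is to apply the standard crossing-number inequality: any graph with $n$ vertices and $m\geq 4n$ edges has crossing number at least $\tfrac{m^3}{64n^2}$ (the Crossing Lemma). Combining, $\tfrac{m^3}{64n^2} \leq \CR(H) \leq k(2r+1)m$, which rearranges to $m^2 \leq 64 k(2r+1) n^2$, i.e. $m/n \leq 8\sqrt{k(2r+1)}$. To get the $(k+1)$ instead of $k$ and handle the $m<4n$ case cleanly, I would instead invoke the bound $\CR(H) \geq m - 3n$ (valid for all $m,n$ with $n\geq 3$) together with the Crossing Lemma in the regime $m\geq 4n$, or simply note $\density(H)\leq$ the claimed bound holds trivially when $m<8\sqrt{(2r+1)(k+1)}\,n$; the factor $k+1$ absorbs the low-density case and the $-3n$ additive term.

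I expect the main obstacle to be the bookkeeping in the contraction step: when subdivision paths are contracted, one must be careful that (a) the resulting object is a drawing of $H$ in which no three edges cross at a point and crossings are counted correctly, and (b) crossings between two edges of $H'$ lying on the \emph{same} subdivision path (if any) are handled — but a clean general-position/perturbation argument, or working directly with the ``$\leq k|E(G')|$ crossings in any subgraph'' formulation applied to $G' = H'$ and then bounding $\CR(H)$ by $\CR(H')$ minus nothing (since contraction of degree-2 vertices cannot increase the crossing number), resolves this. The only remaining subtlety is optimising the constant: using the sharper Crossing Lemma constant and the exact relation $|E(H')|\le (2r+1)|E(H)|$ only when all internal path vertices are present, one checks $8\sqrt{(2r+1)(k+1)}$ is attainable. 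Everything else is the routine algebra of combining $\CR(H)\le k(2r+1)m$ with the Crossing Lemma.
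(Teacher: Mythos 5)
Your proposal is correct and follows essentially the same route as the paper: the paper's Lemma~\ref{SubdivTrick} performs exactly your contraction/charging step (following each subdivision path to draw the corresponding edge of $H$, so that $H$ inherits at most $(2r+1)k$ charged crossings per edge, hence at most $(2r+1)k\,|E(H)|$ crossings in total), and then Lemma~\ref{Extremal} applies the Crossing Lemma precisely as you do, with the same treatment of the $k=0$ and $m<4n$ cases. The only difference is cosmetic: you work with the ``every subgraph has at most $k|E|$ crossings'' reformulation while the paper keeps the per-edge charging, but the density bound uses only the total count either way.
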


\cref{LinearExpansion} is proved in \cref{LinearExpansionSection}.
A key feature of this proof is that it holds for a more general class, so-called $k$-gap-cover-planar graphs, which is of independent interest. \cref{LinearTopoExpansion} is proved in \cref{TopoExpansionSection}. \cref{Surfaces} considers generalisations of all these results for graphs drawn on arbitrary surfaces. \cref{ColouringNumbers} present applications of the above results to graph colouring. Throughout the paper, we use explicit non-optimised constants. 

\section{Background}
\label{Background}

We start with some useful lemmas from the literature, whose proofs we include for completeness. \citet{PachToth97} proved the following result for $k$-planar graphs. The proof works for $k$-gap-planar graphs (which is implicit in \citep{OOW19}). 

\begin{lem}
\label{Extremal}
Every $k$-gap-planar graph has density at most $8\sqrt{k+1}$.
\end{lem}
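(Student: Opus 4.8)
The plan is to combine two standard facts: the Crossing Lemma, and the observation that $k$-gap-planarity caps the total number of crossings in a suitable drawing. Let $G$ be a $k$-gap-planar graph with $n := |V(G)|$ and $m := |E(G)|$; we may assume $n \geq 1$, and it suffices to show $m \leq 8\sqrt{k+1}\,n$.

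First I would fix a $k$-gap-planar drawing $D$ of $G$. By definition, each crossing of $D$ is charged to one of its two edges and each edge receives at most $k$ charges, so $D$ has at most $km$ crossings; since the crossing number $\mathrm{cr}(G)$ is a minimum over all drawings, this gives $\mathrm{cr}(G) \leq km$. Next I would invoke the Crossing Lemma in the form: every graph $G$ with $m \geq 4n$ satisfies $\mathrm{cr}(G) \geq \frac{m^3}{64n^2}$. If $m < 4n$, then the density is less than $4 \leq 8\sqrt{k+1}$ and we are done. Otherwise $\frac{m^3}{64n^2} \leq \mathrm{cr}(G) \leq km$, which rearranges to $m^2 \leq 64 k n^2$, i.e.\ $m \leq 8\sqrt{k}\,n \leq 8\sqrt{k+1}\,n$. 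Combining the two cases yields the claimed bound.

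This argument is essentially routine, which is exactly why the Pach--T\'oth proof for $k$-planar graphs transfers without change: the only property used is that \emph{some} drawing has at most $km$ crossings, and the charging condition is precisely what guarantees this. The only points needing a little care are separating off the sparse case $m < 4n$ (where the Crossing Lemma does not apply but the bound is trivial since $\sqrt{k+1}\geq 1$), and recording the exact constant $\tfrac{1}{64}$ in the version of the Crossing Lemma being used so that the final constant comes out to $8$. I do not expect any genuine obstacle.
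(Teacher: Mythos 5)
Your proof is correct and follows essentially the same route as the paper: bound the number of crossings by $km$ via the charging condition, then apply the Crossing Lemma with constant $\tfrac{1}{64}$ to get $m\leq 8\sqrt{k}\,n$. The only cosmetic difference is that you dispose of the sparse case $m<4n$ directly (which also subsumes $k=0$), whereas the paper handles $k=0$ separately via Euler's formula before assuming $m\geq 4n$.
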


\begin{proof}
Say there are $c$ crossings in a $k$-gap-planar drawing of a graph $G$. Let $n:=|V(G)|$ and $m:=|E(G)|$. So $c\leq km$. The $k=0$ case follows from Euler's Formula for planar graphs, so we may assume that $k\geq 1$ and $m\geq 4n$. Thus, $c \geq \frac{m^3}{64n^2}$ by the Crossing Lemma~\citep{Ajtai82,Leighton83,Proofs1}. Hence, $km \geq c \geq \frac{m^3}{64n^2}$ and $m\leq 8\sqrt{k}n$. 
\end{proof}

The constant in \cref{Extremal} can be tweaked by applying results that improve the constant in the Crossing Lemma~\citep{PRTT-DCG06}.

The next result, which is implicit in the work of \citet{EG17}, bounds the treewidth of $k$-gap-planar graphs.

\begin{lem}
\label{Treewidth}
For every $k$-gap-planar graph $G$ on $n$ vertices, 
$$\tw(G)\leq 21(k+1)^{3/4}n^{1/2}.$$
\end{lem}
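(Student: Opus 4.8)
The plan is to bound the treewidth of a $k$-gap-planar graph $G$ on $n$ vertices by combining two ingredients: a small balanced separator and the standard recursion that turns balanced separators into a tree-decomposition. For the separator, I would use \cref{Extremal} together with the fact, due to \citet{EG17}, that $k$-gap-planar graphs have sublinear separators. Concretely, every monotone class with density at most $d$ in which every $n$-vertex graph has a balanced separator of size $O(\sqrt{dn})$ satisfies $\tw(G) = O(\sqrt{dn})$; here $d = O(\sqrt{k+1})$ by \cref{Extremal}, so if we can get a separator of size $O((k+1)^{1/4}\sqrt{n})$ we obtain treewidth $O((k+1)^{1/4}\sqrt{n})$ — but the stated bound has exponent $3/4$ on $(k+1)$, which suggests the separator theorem being invoked is weaker, of order roughly $(k+1)^{3/4}\sqrt{n}$, probably because \citet{EG17} bound the \emph{bisection width} (number of edges) rather than a vertex separator, and converting edge-count to vertex-count costs a factor related to the density.

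The key steps, in order, would be: (1) quote or reprove that a $k$-gap-planar $n$-vertex graph $G$ has a set $S \subseteq V(G)$ with $|S| \le c(k+1)^{3/4}\sqrt{n}$ such that every component of $G - S$ has at most $\tfrac{2}{3}n$ vertices — this is the Euler-genus/degeneracy-style separator argument of \citet{EG17}, which itself goes through the Crossing Lemma much as in the proof of \cref{Extremal}; (2) observe that the class of $k$-gap-planar graphs is monotone (any subgraph of a $k$-gap-planar graph inherits the drawing and the charging), so the separator bound applies recursively to all the pieces $G-S$; (3) apply the standard lemma that a hereditary (here monotone) class with balanced separators of size $s(n) = O(n^{\alpha})$ for $\alpha < 1$ has treewidth $O(s(n))$, with the constant absorbed using $\sum_i n_i^{\alpha} \le (\sum_i n_i)^{\alpha}$-type estimates over a geometric recursion; (4) track constants to land at $21(k+1)^{3/4}n^{1/2}$.

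The main obstacle I expect is step (1): getting a \emph{vertex} separator of the claimed size, rather than merely an edge bisection, for $k$-gap-planar graphs. The natural route is to first bound the number of crossings (at most $km$, and at most roughly $\tfrac{m^3}{64n^2}$ forces $m = O(\sqrt{k}\,n)$ as in \cref{Extremal}), then use the weighted planarization: replace each crossing by a dummy vertex to get a planar graph $G'$ with $n + O(c)$ vertices, apply the Lipton–Tarjan separator theorem to $G'$, and pull the separator back to $G$. The accounting here — how many dummy vertices a single original vertex's incident edges contribute, and how the edge-charging controls that — is exactly where the $(k+1)^{3/4}$ rather than $(k+1)^{1/4}$ comes from, and where one must be careful. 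An alternative, cleaner for exposition, is to cite \citet{EG17} for the separator/treewidth bound directly and only verify the constant; since the excerpt says the lemma is ``implicit in the work of \citet{EG17}'', I would present the proof as: invoke their crossing-number bound, planarize, separate, recurse, and optimise constants, flagging that the $3/4$ exponent is an artifact of the planarization blow-up rather than of the true separator size.
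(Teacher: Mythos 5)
Your proposal follows the same core route as the paper --- bound the number of crossings by $c\le km$ via the charging, use \cref{Extremal} to get $m\le 8\sqrt{k+1}\,n$ and hence a planarization $G'$ with $n'=n+c\le 9(k+1)^{3/2}n$ vertices, then transfer planar separator machinery back to $G$ --- and your diagnosis that the exponent $3/4$ is an artifact of the planarization blow-up (namely $\sqrt{n'}=O((k+1)^{3/4}\sqrt{n})$) is exactly right. Where you differ is in how the planar bound is transferred: the paper skips the separator recursion entirely by quoting $\tw(G')\le 2\sqrt{3n'}$ for planar graphs \citep{DMW17}, taking a tree-decomposition of $G'$ of that width, fixing an arbitrary orientation of $E(G)$, and replacing every dummy vertex (sitting at the crossing of $\overrightarrow{vw}$ and $\overrightarrow{ab}$) in every bag by the two tails $v$ and $a$; this at most doubles the width and immediately yields a tree-decomposition of $G$ of width at most $4\sqrt{3n'}<21(k+1)^{3/4}n^{1/2}$. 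Your separator-plus-recursion route can be made to work, but it carries two burdens that the direct substitution avoids: (i) a $\tfrac{2}{3}$-balanced separator of $G'$ is balanced with respect to the $n'$ vertices of $G'$, not the $n$ original ones, and since $n'$ can be $\Theta((k+1)^{3/2}n)\gg n$ you genuinely need the weighted planar separator theorem (placing all weight on original vertices), which you gesture at with ``weighted planarization'' but must make explicit; and (ii) when pulling the separator back to $G$ you still need the same dummy-to-endpoint substitution, since a dummy vertex in the separator corresponds to two crossing edges of $G$ that must each lose an endpoint. The speculation about bisection width in your first paragraph is a red herring --- no edge-separator notion is involved, and the $(k+1)^{3/4}$ comes solely from the $n'\le 9(k+1)^{3/2}n$ count of crossing vertices.
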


\begin{proof}
Fix a $k$-gap-planar drawing of $G$ with $c$ crossings. Let $m:=|E(G)|$. Fix an arbitrary edge-orientation of $G$. Let $G'$ be the planar graph obtained by adding a dummy vertex at each crossing point. Say $G'$ has $n'$ vertices. Thus $n'=n+c\leq n+ km \leq n + 8(k+1)^{3/2}n \leq 9(k+1)^{3/2}n$ by \cref{Extremal}. Every planar graph on $n'$ vertices has treewidth at most $2\sqrt{3n'}$ \citep{DMW17}. So $\tw(G')\leq 2\sqrt{3n'}$. Consider a tree-decomposition $\TT$ of $G'$ with width at most $2\sqrt{3n'}$. For each dummy vertex $z\in V(G')$, if $z$ is at the crossing point of edges $\overrightarrow{vw}$ and $\overrightarrow{ab}$ in $G$, then replace each instance of $z$ in a bag of $\TT$ by $v$ and $a$. We obtain a tree-decomposition of $G$ with width at most $4\sqrt{3n'} < 21(k+1)^{3/4} n^{1/2}$.
\end{proof}


\citet{DN16} (also see \citep{Dvorak21,Dvorak16,Dvorak18}) showed that a hereditary graph class $\GG$ has polynomial expansion if and only if every graph in $\GG$ admits strongly sublinear separators, which is equivalent to saying that every $n$-vertex graph in $\GG$ has treewidth $O(n^{1-\epsilon})$ for some fixed $\epsilon>0$. \citet{EG17} used this result to conclude that $k$-gap-planar graphs have polynomial expansion. To obtain the best known result using this approach we apply the following result of \citet{ER18}.

\begin{thm}[{\citep[Theorem~4]{ER18}}]
\label{ER}
For any $\alpha > 0$ and $\delta\in(0,1]$, if a monotone class $\GG$ has the property that every $n$-vertex
graph in $\GG$ has a balanced separator of order at most $\alpha n^{1-\delta}$, then $\GG$ has expansion bounded by the function 
$$f:r\to \big(c_1\, \alpha \log(\alpha+1)\cdot (r+1)\big)^{1/\delta} \big( \tfrac{1}{\delta} \log(r+3) \big)^{c_2/\delta},$$ 
where $c_1$ and $c_2$ are absolute constants. 
\end{thm}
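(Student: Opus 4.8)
The plan is to turn the separator hypothesis on $\GG$ into a bound on the density of \emph{shallow minors} of graphs in $\GG$, in two steps: (a) show that an $r$-shallow minor of a graph in $\GG$ again has a balanced separator, whose order is only mildly worse (a factor depending on $r$ and on the density we are trying to bound); (b) invoke a quantitative form of the fact --- implicit in \citet{DN16} and \citet{Sparsity} --- that a monotone class all of whose members have balanced separators of order $\beta n^{1-\delta}$ has degeneracy, hence density, at most some $g(\beta,\delta)$. Composing (a) and (b) produces an inequality in which $\nabla_r(G)$ appears on both sides, which we solve to obtain the stated bound. (Finiteness of $\nabla_r(G)$, needed to make this last step legitimate, is free: iterating balanced separators gives $\tw(H)=O(\alpha\,|V(H)|^{1-\delta})$ for every $H\in\GG$, so $\GG$ has strongly sublinear separators, hence polynomial expansion by \citet{DN16}.)

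For step (a), let $H\minorof_r G$ via a model $\mu$ chosen to be \emph{minimal}: each $\mu(v)$ may be taken to be a BFS tree from its centre $z_v$ of depth at most $r$, pruned so that every leaf is the endpoint of an edge leaving $\mu(v)$; such a tree has at most $\deg_H(v)$ leaves and hence at most $1+r\deg_H(v)$ vertices. Let $G'$ consist of the trees $\mu(v)$ together with one connecting edge per edge of $H$; then $G'\in\GG$ (monotonicity) and $|V(G')|\le |V(H)|+2r\,|E(H)|$. Put weight $1$ on each $z_v$ and $0$ elsewhere; applying the unweighted balanced separators of $\GG$ and recursing into the heavier side yields a $\tfrac23$-balanced separator $S$ of $G'$ \emph{for this weighting}, of order at most $c_\delta\,\alpha\,|V(G')|^{1-\delta}$, where $c_\delta$ is the sum of the geometric series with ratio $(2/3)^{1-\delta}$ (so $c_\delta=O(1)$ for $\delta$ bounded away from $1$, and $c_\delta=O(\log(r+2))$ in general). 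Setting $S_H:=\{v\in V(H):V(\mu(v))\cap S\neq\emptyset\}$, if $v,w\notin S_H$ are adjacent in $H$ then $\mu(v),\mu(w)$ and the connecting edge avoid $S$, so $\mu(v),\mu(w)$ lie in one component of $G'-S$; hence $S_H$ separates $H$ and each component of $H-S_H$ contains centres from only one component of $G'-S$, so has at most $\tfrac23|V(H)|$ vertices. Thus $H$ has a $\tfrac23$-balanced separator of order $|S_H|\le |S|\le c_\delta\,\alpha\,(|V(H)|+2r\,|E(H)|)^{1-\delta}$. Since every subgraph of an $r$-shallow minor of $G$ is itself an $r$-shallow minor of $G$, and every such subgraph $H'$ satisfies $|E(H')|\le\nabla_r(G)\,|V(H')|$, this gives: \emph{every subgraph $H'$ of $H$ has a $\tfrac23$-balanced separator of order at most $\beta\,|V(H')|^{1-\delta}$, where $\beta:=c_\delta\,\alpha\,(1+2r\nabla_r(G))^{1-\delta}$.}

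For step (b) one proves the degeneracy lemma by taking a densest subgraph $H^\star$ (which has minimum degree at least its density), recursively decomposing it along balanced separators, and amortising the edge count over the $O\!\big(\tfrac1\delta\log(\cdot)\big)$ recursion levels, using that every piece again lies in the class; this yields density at most $g(\beta,\delta)$ of the shape $(c\,\beta\log(\beta+1))^{1/\delta}\,(1/\delta)^{O(1/\delta)}$. Feeding in $\beta=c_\delta\,\alpha\,(1+2r\nabla_r(G))^{1-\delta}$ and writing $D:=\nabla_r(G)$ gives
$$
D \;\le\; \Big(c'\,\alpha\,(1+2rD)^{1-\delta}\,\log\!\big(\alpha+rD+2\big)\Big)^{1/\delta}\,\big(\tfrac1\delta\big)^{c''/\delta}.
$$
Since $D<\infty$, one may substitute the a priori polynomial bound on $D$ into the right-hand side once to eliminate the internal occurrence of $D$, and then isolate $D$: the exponent $1/\delta$ is forced by inverting $n^{1-\delta}$, the factor $\log(r+3)$ emerges from $c_\delta$ together with this substitution, the $\log(\alpha+1)$ from the degeneracy lemma, and the $(1/\delta)^{O(1/\delta)}$ from the geometric series. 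This lands on $\nabla_r(G)\le \big(c_1\,\alpha\log(\alpha+1)\,(r+1)\big)^{1/\delta}\big(\tfrac1\delta\log(r+3)\big)^{c_2/\delta}$ with absolute $c_1,c_2$.

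\textbf{Main obstacle.} The difficulty is closing the recursion cleanly. The branch-set bound $|V(\mu(v))|\le 1+r\deg_H(v)$ re-introduces $|E(H)|$ --- equivalently the very density $D=\nabla_r(G)$ being bounded --- into the order of the separator of $H$, and when the degeneracy lemma is applied one gets $D\lesssim\big(\alpha(1+2rD)^{1-\delta}\big)^{1/\delta}$, i.e.\ $D\lesssim(rD)^{(1-\delta)/\delta}$ up to logs; this inequality pins $D$ down only when $(1-\delta)/\delta<1$, i.e.\ $\delta>\tfrac12$, and degenerates at $\delta=\tfrac12$. To cover all $\delta\in(0,1]$ one must bootstrap --- bound $\nabla_r(G)$ using a previously established weaker polynomial bound, or restructure so that only $O(\log r)$ ``layers'' of shallowness are composed at a time, or extract the separator of $G'$ more carefully so that the auxiliary graph is not inflated by the unknown density. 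Everything else is routine bookkeeping of the $\delta$- and logarithmic factors to match the stated form.
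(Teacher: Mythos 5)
This statement is not proved in the paper at all: it is quoted verbatim from \citet{ER18} (their Theorem~4), so your attempt has to stand on its own, and as written it does not. The gap is exactly the one you flag at the end, and it is not a deferrable technicality but the crux of the theorem. In your step (a) the branch sets have size up to $1+r\deg_H(v)$, so the separator you obtain for $H$ has order $c_\delta\,\alpha\,(1+2r\nabla_r(G))^{1-\delta}|V(H')|^{1-\delta}$, and feeding this into step (b) yields (ignoring logarithms) $D^{\delta}\leq c\,\alpha\,(2rD)^{1-\delta}$ with $D=\nabla_r(G)$, i.e.\ $D^{2\delta-1}\leq c\,\alpha\,(2r)^{1-\delta}$. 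For $\delta\leq\tfrac12$ this is vacuous, and that is precisely the regime the present paper needs: \cref{Treewidth} gives $\delta=\tfrac12$, and \cref{kGapPlanarNabla} is obtained by applying \cref{ER} with $\delta=\tfrac12$. The bootstrap you propose does not repair this: substituting an a priori polynomial bound $D\leq P(r)$ into the right-hand side returns roughly $(c\alpha)^{1/\delta}\,(rP(r))^{(1-\delta)/\delta}$, which for $\delta\leq\tfrac12$ is at least as large as $P(r)$, so the iteration does not contract and no finite number of substitutions improves the bound. Even in the range $\delta>\tfrac12$ where the fixed-point inequality can be solved, it gives exponent $1/(2\delta-1)$ rather than the stated $1/\delta$, blowing up as $\delta\to\tfrac12$; so the assertion that the computation ``lands on'' $\big(c_1\alpha\log(\alpha+1)(r+1)\big)^{1/\delta}\big(\tfrac1\delta\log(r+3)\big)^{c_2/\delta}$ is not justified by anything in the sketch.

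The missing idea, then, is how to prevent the unknown density $\nabla_r(G)$ from inflating the auxiliary graph (equivalently, the separator order) in the first place --- for instance by an argument that composes shallowness in stages and only ever pays bounded factors per stage, which is also what the $\big(\tfrac1\delta\log(r+3)\big)^{c_2/\delta}$ term in the statement is hinting at. This is exactly what \citet{ER18} supply and what your outline defers to a list of possible fixes without carrying any of them out. Two smaller inaccuracies: the constant $c_\delta$ from iterating unweighted separators depends on $\delta$ and on $|V(G')|$ (for $\delta$ near $1$ it is $\Theta\big(\tfrac{1}{1-\delta}\big)$, and at $\delta=1$ it is $O(\log|V(G')|)$), not ``$O(\log(r+2))$''; and the degeneracy-from-separators lemma in step (b) is plausible but is itself only sketched, whereas its quantitative form (including the $\log(\alpha+1)$ factor) is part of what the cited theorem packages. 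As it stands, the proposal reproduces the easy reductions but not the theorem.
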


The balanced separator property in \cref{ER} is implied if every $n$-vertex graph in $\GG$ has treewidth at most $\alpha n^{1-\delta}$ (by \citep[(2.6)]{RS-II}). Thus, for $k$-gap-planar graphs,  \cref{ER} is applicable with $\alpha=21(k+1)^{3/4}$ and $\delta=\frac12$ by \cref{Treewidth}. Hence:

\begin{prop}
\label{kGapPlanarNabla}
For every $k$-gap-planar graph $G$, 
$$\nabla_r(G) \leq 
c_1\,k^{3/2} \log^2(k +1) \cdot (r+1)^{2} ( \log(r+3) )^{c_2},$$
where  $c_1$ and $c_2$ are absolute constants.
\end{prop}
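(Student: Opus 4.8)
The plan is to feed the treewidth bound of \cref{Treewidth} into the Esperet--Raymond expansion bound (\cref{ER}), exactly as indicated in the paragraph preceding the statement. The one point that genuinely requires checking is that \cref{ER} is applicable, i.e.\ that the class of $k$-gap-planar graphs is monotone: given a $k$-gap-planar drawing of a graph $G$ and a subgraph $H\subseteq G$, the drawing of $H$ inherited from $G$ is $k$-gap-planar, since every crossing of $H$ is already a crossing of $G$ and may retain the edge to which it was charged, and no edge of $H$ is charged more often than in $G$. Hence $k$-gap-planar graphs form a monotone class.

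Next, as noted in the excerpt, the treewidth bound of \cref{Treewidth} yields (via \citep[(2.6)]{RS-II}) that every $n$-vertex $k$-gap-planar graph has a balanced separator of order at most $\alpha n^{1-\delta}$ with $\alpha:=21(k+1)^{3/4}$ and $\delta:=\tfrac12$, so \cref{ER} applies. I may also assume $k\ge1$, so that $\log(k+1)$ is bounded away from $0$ and $(k+1)^{3/2}=O(k^{3/2})$.

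Finally I would substitute $\alpha$ and $\delta$ (so that $1/\delta=2$) into the conclusion of \cref{ER}, obtaining
$$\nabla_r(G)\le\big(c_1\cdot 21(k+1)^{3/4}\log\!\big(21(k+1)^{3/4}+1\big)\cdot(r+1)\big)^{2}\big(2\log(r+3)\big)^{2c_2}.$$
Squaring turns $(k+1)^{3/4}$ into $(k+1)^{3/2}$ and the logarithmic factor into $O(\log^2(k+1))$, while $(2\log(r+3))^{2c_2}$ contributes $(\log(r+3))^{2c_2}$ up to an absolute constant; collecting all absolute constants (and using $(k+1)^{3/2}=O(k^{3/2})$ for $k\ge1$) gives the claimed inequality with new absolute constants $c_1,c_2$. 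I expect no real obstacle here: the substance is entirely in \cref{Treewidth} and in the imported \cref{ER}, and the only subtlety is bookkeeping --- in particular, keeping track of how the exponent $1/\delta=2$ acts on the $\log(\alpha+1)$ factor, which is what turns a $\log(k+1)$ dependence into the $\log^2(k+1)$ appearing in the final bound.
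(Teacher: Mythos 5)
Your proposal is correct and is essentially identical to the paper's own argument, which consists precisely of feeding the treewidth bound of \cref{Treewidth} (via \citep[(2.6)]{RS-II}) into \cref{ER} with $\alpha=21(k+1)^{3/4}$ and $\delta=\tfrac12$ and collecting constants. Your explicit check that the class of $k$-gap-planar graphs is monotone is a sensible addition the paper leaves implicit, but it does not change the route.
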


\cref{kGapPlanarNabla} says that $k$-gap-planar graphs have  polynomial $O(r^2\polylog r)$ expansion. The next sections give improved bounds.  


\section{Shallow Minors}
\label{LinearExpansionSection}

This section proves that $k$-gap-planar graphs have linear expansion (\cref{LinearExpansion}). A key idea in the proof is to consider the following more general graph class that is implicit in the work of \citet{AFPS14}, and explicitly introduced by \citet{HKW}.
For a graph $G$ and set $S\subseteq E(G)$, a \defn{vertex-cover} of $S$ is a set $C\subseteq V(G)$ such that every edge in $S$ has at least one endpoint in $C$. 

Distinct edges $e$ and $f$ in a graph are \defn{independent} if $e$ and $f$ have no common endpoint. Consider a drawing $D$ of a graph $G$. Let $D^\times$ be the set of pairs of independent edges that cross in $D$. A \defn{$D$-cover} of an edge $e\in E(G)$ is a vertex-cover of the set of edges $f$ such that $\{e,f\}\in D^\times$. Any minimal $D$-cover of $e$ does not include an endpoint of $e$. So it suffices to consider $D$-covers of an edge $vw$ in $G-v-w$. For an integer $k \geq 0$, a drawing $D$ of a graph $G$ is \defn{$k$-cover-planar} if each edge $vw\in E(G)$ has a $D$-cover in $G-v-w$ of size at most $k$. A graph $G$ is \defn{$k$-cover-planar} if $G$ has a $k$-cover-planar drawing\footnote{There is a slight difference between this definition of $k$-cover-planar and the definition of $k$-cover-planar given by \citet{HKW}, who consider vertex-covers of the set of all edges that cross an edge $vw$ (including edges that cross $vw$ and are incident to $v$ or $w$). Every $D$-cover under this definition is a $D$-cover under our definition, and every $k$-cover-planar graph under our definition is $(k+2)$-cover-planar under the definition of \citet{HKW}, by simply adding $v$ and $w$ to the cover of each edge $vw$. We work with this definition since it matches the approach of \citet{AFPS14}, and is the strongest setting where the proof of \cref{ExtremalGapCover} works.}.

\citet{HKW} showed that if a graph $G$ has a $k$-cover-planar drawing with no $t$ pairwise crossing edges incident to a single vertex, then $G$ has row treewidth at most some function $f(k,t)$. This implies that $G$ has layered treewidth at most $f(k,t)$, and the class of such graphs has linear expansion by a result of \citet{DMW17}. However, the dependence on $k$ and $t$ is exponential. We give much improved bounds on the expansion with no dependence on $t$. Moreover, our results hold in the following more general setting.

The definitions of $k$-gap-planar and $k$-cover-planar are merged as follows. Consider a drawing $D$ of a graph $G$. Recall that $D^\times$ is the set of pairs of independent edges that cross in $D$. A \defn{bearing} of $D$ is a set $B$ of ordered pairs $(e,f)$ with $\{e,f\}\in D^\times$, such that for each $\{e,f\}\in D^\times$ at least one of $(e,f)$ and $(f,e)$ (possibly both) is in $B$. For a bearing $B$ of $D$, a \defn{$B$-cover} of an edge $e\in E(G)$ is a vertex-cover of the set of all edges $f$ such that $(e,f)\in B$. For an integer $k \geq 0$, a drawing $D$ of a graph $G$ is \defn{$k$-gap-cover-planar} if there is a bearing $B$ of $D$ such that each edge has a $B$-cover in $G-v-w$ of size at most $k$.

Every $k$-gap-planar drawing is $k$-gap-cover-planar. But $k$-gap-planar drawings are strictly more general. To see this, 
as illustrated in \cref{GapCoverPlanar}, consider the graph drawing $D$ consisting of $n$ disjoint plane stars $S_1,\dots,S_n$, each with $n$ leaves, plus one star $T$ with $n$ leaves, where each edge in $S_1\cup\dots\cup S_n$ crosses each edge in $T$. Let $r$ be the root of $T$. Then $B:=\{(f,e):f\in E(S_1\cup\dots\cup S_n),e\in E(T)\}$ is a bearing of $D$, such that for each edge $vw\in E(S_1\cup\dots\cup S_n)$, the set $\{r\}$ is a $B$-cover of $vw$ in $G-v-w$ of size 1, and for each  edge $rv\in E(T)$, $\emptyset$ is a $B$-cover of $rv$ in $G-r-v$ of size 0. So $D$ is 1-gap-cover-planar. On the other hand, $D$ is not $k$-gap-planar for $n\gg k$, since every edge in $S_1$ crosses every edge in $T$, and $D$ is not $k$-cover-planar for $n\gg k$, since each edge in $T$ is crossed by a matching of $n$ edges.

\begin{figure}
    \centering
    \includegraphics{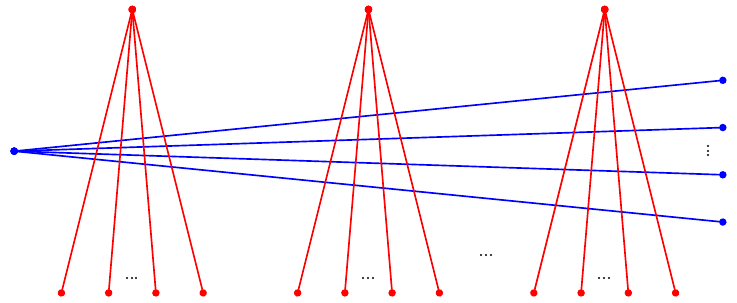}
    \caption{A 1-gap-cover-planar drawing that is neither $O(1)$-gap-planar nor $O(1)$-cover planar.}
    \label{GapCoverPlanar}
\end{figure}

The next lemma is the heart of the paper. 

\begin{lem}
\label{ShallowMinorGapCover}
Every $r$-shallow minor of a $k$-gap-cover-planar graph $G$ is $(2r+1)k$-gap-cover-planar.
\end{lem}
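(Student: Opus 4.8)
The plan is to take an $r$-shallow model $\mu$ of a graph $H$ in a $k$-gap-cover-planar graph $G$, together with a $k$-gap-cover-planar drawing $D$ of $G$ and a bearing $B$ witnessing it, and build a drawing $D'$ of $H$ by "contracting" each branch set $\mu(v)$ to its point. Concretely, for each $v\in V(H)$ fix a vertex $c_v$ in $\mu(v)$ that is a center (so $\mu(v)$ has a BFS spanning tree $T_v$ of radius at most $r$ rooted at $c_v$), and place $v$ at the location of $c_v$ in $D$. For each edge $vw\in E(H)$, pick an edge $e_{vw}=ab$ of $G$ with $a\in\mu(v)$, $b\in\mu(w)$ realising the adjacency; route $vw$ in $D'$ along the path in $T_v$ from $c_v$ to $a$, then across $e_{vw}$, then along the path in $T_w$ from $b$ to $c_w$. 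This curve passes through at most $r$ edges of $T_v$, the edge $e_{vw}$, and at most $r$ edges of $T_w$, so each edge of $H$ is drawn as a concatenation of at most $2r+1$ edges of $G$ (running very close to those edges, so that crossings between two edges of $H$ correspond to crossings between the underlying $G$-edges, after a small perturbation to keep the drawing generic and avoid three edges meeting at a point).

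The heart of the argument is to exhibit a bearing $B'$ of $D'$ and small $B'$-covers. For edges $\alpha,\beta$ of $H$ drawn using $G$-edge-segments, $\alpha$ and $\beta$ cross in $D'$ only if some $G$-edge used by $\alpha$ crosses some $G$-edge used by $\beta$; and this can happen with a crossing $\{e,f\}\in D^{\times}$ only for $e$ on $\alpha$'s list and $f$ on $\beta$'s list (crossings within a single $T_v$ don't occur since $T_v$ is a plane subtree, and self-crossings of $\alpha$ can be removed by shortcutting). So I define $B':=\{(\alpha,\beta): \text{some } (e,f)\in B \text{ with } e \text{ used by } \alpha,\ f \text{ used by } \beta\}$; one must check this is a valid bearing, i.e. for every crossing pair $\{\alpha,\beta\}$ of $D'$ at least one orientation is in $B'$ — which holds because for the underlying crossing $\{e,f\}$, at least one of $(e,f),(f,e)$ lies in $B$. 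Now fix $\alpha=vw\in E(H)$; the $G$-edges used by $\alpha$ are the at most $2r+1$ edges $e_1,\dots,e_s$ along its route. If $\beta=xy$ satisfies $(\alpha,\beta)\in B'$, then some $e_i$ has $(e_i,f)\in B$ with $f$ an edge of $\beta$'s route, so $f$ is incident to a vertex of the $B$-cover $C_{e_i}$ of $e_i$ in $G-(\text{endpoints of }e_i)$, and $|C_{e_i}|\le k$. Taking $C:=\bigcup_{i=1}^s C_{e_i}$ gives a set of $\le (2r+1)k$ vertices of $G$ such that every $\beta$ with $(\alpha,\beta)\in B'$ uses a $G$-edge incident to $C$.

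The last step is to convert $C\subseteq V(G)$ into a $B'$-cover of $\alpha$ in $H$: each vertex $u\in C$ lies in at most one branch set $\mu(z)$ (and in none if $u$ is a contracted-away vertex, in which case it contributes nothing useful — but in fact every edge $f$ of a route lies in some $T_z$ or is some $e_{xy}$, and the relevant endpoint incident to $C$ then pins down the vertex $z$ of $H$ whose route segment uses a $G$-edge at $u$), so replacing each $u\in C$ by the unique $z\in V(H)$ with $u\in V(\mu(z))$ yields a set $C'\subseteq V(H)$ of size at most $(2r+1)k$. One checks $C'$ is a vertex-cover in $H$ of all $\beta$ with $(\alpha,\beta)\in B'$: such a $\beta$ has a route $G$-edge incident to some $u\in C$, that $G$-edge belongs to the route of $\beta=xy$, hence lies in $T_x$, $T_y$, or equals $e_{xy}$, forcing $u\in V(\mu(x))\cup V(\mu(y))$, hence $z\in\{x,y\}$, so $\beta$ is covered by $C'$; and $C'$ avoids the endpoints $v,w$ of $\alpha$ provided no route $G$-edge of $\beta$ sits in $\mu(v)$ or $\mu(w)$ — which we can arrange, since a $B$-cover $C_{e_i}$ of $e_i$ avoids the endpoints of $e_i$, and with a little care (e.g. choosing the $e_{xy}$ and the trees so that route edges of $\beta$ incident to $\mu(v)\cup\mu(w)$ are exactly the $e_{xy}$-type edges with $\{x,y\}\cap\{v,w\}\neq\varnothing$, i.e. when $\beta$ is actually incident to $\alpha$, which is excluded).

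The main obstacle I anticipate is bookkeeping around the interface between $D$-crossings and $D'$-crossings: ensuring that contracting the trees $T_v$ and laying the $H$-edges alongside $G$-edges does not create spurious crossings (in particular crossings involving pairs of $G$-edges that share an endpoint in some $\mu(z)$, which are \emph{not} elements of $D^{\times}$), and arguing that such crossings can be removed by local rerouting/shortcutting without increasing the cover sizes. Making precise the "draw $\alpha$ sufficiently close to its $G$-edge route, then perturb to generic position" step, and verifying that the resulting $B'$ really is a bearing and that $C'$ genuinely excludes the endpoints of $\alpha$, is where the care lies; the counting ($s\le 2r+1$, each $|C_{e_i}|\le k$) is immediate once the setup is correct.
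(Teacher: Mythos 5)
Your proposal is correct and follows essentially the same route as the paper's proof: contract each branch set to its centre, route each $H$-edge in a thin tube around a walk of at most $2r+1$ edges of $G$, transfer the bearing pairwise from the underlying $G$-edge crossings, and take the union of the at most $2r+1$ covers of size $k$, pushed forward to $V(H)$ via the branch sets. (One small remark: the trees $T_v$ need not be plane in the given drawing, but crossings between two $H$-edges routed through the same $T_v$ are harmless anyway, since those edges share the endpoint $v$ and hence do not form an independent pair that a bearing must account for.)
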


\begin{proof}
Fix a $k$-gap-cover-planar drawing $D_0$ of $G$ and a bearing $B_0$ of $D_0$, such that for each edge $vw\in E(G)$, there is a $B_0$-cover $A_{vw}$ of $vw$ in $G-v-w$ of size at most $k$.

Let $\mu$ be an $r$-shallow model of a graph $H$ in $G$. 
For each vertex $v\in V(H)$, $\mu(v)$ is a connected subgraph of $G$ with radius at most $r$; let $v_0$ be a centre of $\mu(v)$. 
For each edge $e\in E(G)$, 
let $A'_e$ be the set of vertices $v\in V(H)$ such that $A_e \cap V(\mu(v))\neq\emptyset$. Since each vertex in $A_e$ is in at most one subgraph $\mu(v)$,
we have $|A'_e|\leq |A_e|\leq k$. 

We now construct a drawing $D$ of $H$. 
Embed each vertex $v$ of $H$ at $v_0$. 
Let $\epsilon$ be a positive real number. 
For each edge $e\in E(G)$, let $C_e:=\{x\in\mathbb{R}^2:\dist(x,e)\leq\epsilon\}$.
There exists $\epsilon>0$ such that 
for each edge $e\in E(G)$, the only edges that intersect $C_e$ cross $e$ or are incident to an endpoint of $e$.

Embed each edge $vw\in E(H)$ as follows. 
Since $\mu$ is $r$-shallow, there is a path $(v_0,v_1,\dots,v_a)$ in $\mu(v)$, 
and there is a path $(w_0,w_1,\dots,w_b)$ in $\mu(w)$, such that $v_aw_b\in E(G)$ and $a\leq r$ and $b\leq r$.
Let $P_{vw}$ be the walk 
$(v_0,v_1,\dots,v_{a-1},v_a,w_b,w_{b-1},\dots,w_1,w_0)$. 
Embed $vw$ as a curve from $v_0$ to $w_0$ in 
$$C_{v_0v_1}\cup C_{v_1v_2}\cup\dots\cup C_{v_{a-1}v_a}\cup C_{v_aw_b}\cup C_{w_bw_{b-1}}\cup\dots\cup C_{w_1w_0}.$$
We obtain a drawing $D$ of $H$. 
Let $$A''_{vw}:= 
A'_{v_0v_1}\cup A'_{v_1v_2}\cup\dots\cup A'_{v_{a-1}v_a}\cup A'_{v_aw_b}\cup A'_{w_bw_{b-1}}\cup\dots\cup A_{w_1w_0}.$$

We now construct a bearing $B$ of $D$. Consider edges $e,f\in E(H)$ with $\{e,f\}\in D^\times$. For all edges $e_0,f_0\in E(G)$ such that $\{e_0,f_0\}\in D_0^\times$ and $e_0\in E(P_{e})$ and $f_0\in E(P_f)$, if $(e_0,f_0)\in B_0$ then add $(e,f)$ to $B$, and if $(f_0,e_0)\in B_0$ then add $(f,e)$ to $B$. Since $\{e,f\}\in D^\times$, by construction, 
$(e_0,f_0)$ or $(f_0,e_0)$ (possibly both) is in $B_0$. So $B$ is a bearing of $D$.

Consider an edge $e=vw$ of $H$. 
We claim that $A''_e$ is a cover of $e$. 
Consider an edge $e'=v'w'$ of $H$ with $(e,e')\in D^\times$.
By the choice of $\epsilon$, 
(a) $P_e$ and $P_{e'}$ have a vertex $x$ in common, or
(b) there is an edge $xy$ in $P_e$ and an edge $x'y'$ in $P_{e'}$ with $(xy,x'y')\in B$.
In case (a), without loss of generality, $x\in \mu(v)$, implying $v$ is an endpoint of both $e$ and $e'$, which contradicts that $\{e,e'\}\in D^\times$. 
In case (b), without loss of generality, $x'\in A_{e}$. 
By construction, $x\in \mu(v')\cup\mu(w')$ implying $\{v',w'\}\cap A''_e\neq\emptyset$.
So $A''_e$ is a cover for $e$. 
By construction, $|A''_e|\leq (2r+1)k$. 

So $D$ is a $(2r+1)k$-gap-cover-planar drawing of $H$, and
$H$ is $(2r+1)k$-gap-cover-planar.
\end{proof}

Now consider the extremal question for $k$-cover-planar graphs and $k$-gap-cover graphs. Pinchasi (see \citep[Lemma~4.1]{AFPS14}) proved that $k$-cover-planar graphs have density at most 
$d_{k} :=  \frac{3(k+1)^{k+1}}{k^k}$. The following variation of this proof establishes the same bound for $k$-gap-cover-planar graphs.

\begin{lem}
\label{ExtremalGapCover}
Every $k$-gap-cover-planar graph $G$ has density at most $d_k$. 
\end{lem}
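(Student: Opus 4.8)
The plan is to mimic Pinchasi's discharging/induction argument for $k$-cover-planar graphs, but carried out with respect to a bearing $B$ rather than the symmetric crossing relation. First I would reduce to the ``dense'' case: assume for contradiction that $G$ is an edge-minimal $k$-gap-cover-planar graph with $|E(G)| > d_k\,|V(G)|$, fix a $k$-gap-cover-planar drawing $D$ of $G$ with a bearing $B$ realising $B$-covers of size at most $k$, and observe that by minimality every vertex has degree at least $d_k+1 \geq 2k+3$ (deleting a low-degree vertex keeps the density above $d_k$ and preserves $k$-gap-cover-planarity, since deleting a vertex can only shrink $D^\times$, the bearing, and the covers). The aim is then to locate a local configuration at which the edge count is forced to drop, contradicting the density assumption.

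The main step is a counting argument on crossings. For each edge $e = vw$, let $A_e \subseteq V(G)\setminus\{v,w\}$ be a $B$-cover of $e$ of size at most $k$; then every edge $f$ with $(e,f)\in B$ has an endpoint in $A_e$. The key asymmetry exploited by the bearing is that although an edge $f$ crossing $e$ need not be ``seen'' by $A_e$ (if only $(f,e)\in B$, not $(e,f)$), in that case $f$ is seen by $A_f$ via $e$'s endpoints — but minimal covers avoid endpoints of the covered edge, so actually in case $(f,e)\in B$ we get that $e$ has an endpoint in... no: we get that $A_f$ covers $e$, and since $A_f$ is a vertex-cover of the set $\{g : (f,g)\in B\}\ni e$, one endpoint of $e$ lies in $A_f$. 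This is the same structural dichotomy that drives \cref{ShallowMinorGapCover}: for each crossing pair $\{e,f\}$, either an endpoint of $f$ lies in $A_e$ or an endpoint of $e$ lies in $A_f$. I would then bound the total number of crossings by charging each crossing $\{e,f\}$ to whichever of $A_e$, $A_f$ witnesses it, and for a fixed vertex $u$ bound the number of edges $e$ with $u\in A_e$ together with the number of edges incident to $u$; combined with the Crossing Lemma (as in \cref{Extremal}) applied to a suitable subgraph, this yields the bound $d_k = 3(k+1)^{k+1}/k^k$. The precise extremal shape of the constant strongly suggests the argument is actually the inductive one of Pinchasi: take a vertex $v$ of minimum degree $\deg(v)=\delta$, consider the ``link'' structure of crossings on edges at $v$, and set up a recursion whose solution is $d_k$; the $k$-gap variant requires checking that each inductive deletion (of $v$, or of $v$ together with an edge near a cover) preserves the existence of a bearing with small covers.

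I would therefore structure the proof as: (i) state the dichotomy for crossing pairs from the bearing and small-cover hypothesis; (ii) run Pinchasi's induction on $|E(G)|$, at each stage either removing a low-degree vertex or, when minimum degree is large, identifying an edge that can be deleted so that the density stays above $d_k$ but the recursive bound is contradicted; (iii) verify closure of $k$-gap-cover-planarity under these operations, using that vertex/edge deletion only restricts $D^\times$, $B$, and the covers. The main obstacle I anticipate is the bookkeeping in step (ii): matching Pinchasi's argument requires that the covers behave well after deleting a vertex and its incident edges, and the asymmetry of the bearing (an edge may ``charge out'' crossings it does not ``see'') has to be handled so that it never costs more than the $k$-cover-planar case — exactly the point the footnote flags as ``the strongest setting where the proof works''. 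If a direct induction proves awkward, a fallback is the Crossing-Lemma route sketched above: since in any subgraph $G'$, summing the cover dichotomy over all crossing pairs gives at most $2k\,|E(G')|$ crossings in the induced drawing (each crossing witnessed by one of the two size-$\leq k$ covers, each vertex in at most... ), one gets $\CR_{G'}(D) \le k\,|E(G')|$ or so, and then $|E(G)|\le 8\sqrt{k+1}\,|V(G)|$ as in \cref{Extremal} — though this only recovers the weaker $O(\sqrt k)$ bound, not $d_k$, so the induction is what is really needed for the stated constant.
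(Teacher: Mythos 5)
The structural dichotomy you isolate is correct and is exactly what the paper uses: for each crossing pair $\{e,f\}\in D^\times$ with covers $A_e,A_f$ of size at most $k$, either an endpoint of $f$ lies in $A_e$ or an endpoint of $e$ lies in $A_f$. But neither of the two ways you propose to exploit it can be made to work. The Crossing-Lemma fallback is simply false for this class: a single vertex of $A_e$ may witness arbitrarily many edges crossing $e$ (that is the whole point of covers as opposed to gaps), so there is no bound of the form $k\,|E(G')|$ on the number of crossings in induced subdrawings. The paper's own example (\cref{GapCoverPlanar}) is $1$-gap-cover-planar yet has $\Theta(|E(G)|)$ crossings per edge, so no argument routed through counting crossings can succeed. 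As for the main route, you never actually state Pinchasi's argument — you only assert that there is ``a recursion whose solution is $d_k$'' — so the proposal has no complete proof of the key step. The shape of the constant $d_k=3(k+1)^{k+1}/k^k = 3/\bigl(p(1-p)^k\bigr)$ with $p=\tfrac{1}{k+1}$ is not the signature of an induction; it is the signature of a random sparsification.

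The missing idea is the following probabilistic deletion argument (which is Pinchasi's argument, and which the paper adapts verbatim). Sample each vertex of $G$ independently with probability $p=\tfrac{1}{k+1}$, and let $H$ be the graph on the sampled vertices whose edges are those $uv\in E(G)$ with $u,v$ sampled and \emph{no} vertex of $A_{uv}$ sampled. Your dichotomy shows that no two independent edges of $H$ cross in the induced drawing: if $\{e,f\}\in D^\times$ with, say, $(e,f)\in B$, then some endpoint of $f$ lies in $A_e$, so $e\in E(H)$ forces that endpoint to be unsampled and hence $f\notin E(H)$. By the Hanani--Tutte theorem $H$ is planar, so $|E(H)|\leq 3|V(H)|$ always holds; taking expectations, $p^2(1-p)^k\,|E(G)|\leq \mathbb{E}|E(H)|\leq 3\,\mathbb{E}|V(H)|=3p\,|V(G)|$, which rearranges to $|E(G)|\leq d_k|V(G)|$. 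Note that this argument needs no minimum-degree reduction, no discharging, and no closure of the class under deletions beyond the trivial observation that the covers restrict; your step (iii) and the bookkeeping you worry about in step (ii) do not arise.
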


\begin{proof}
Let $n:=|V(G)|$ and $m:=|E(G)|$. Let $D$ be a $k$-gap-cover-planar drawing of $G$. Let $B$ be a bearing of $D$, such that for each edge $vw\in E(G)$, $A_{vw}$ is a $B$-cover of $vw$ in $G-v-w$. Choose each vertex of $G$ independently with probability $p:=\frac{1}{k+1}$. Let $H$ be the subgraph of $G$ where $V(H)$ is the set of chosen vertices, and $E(H)$ is the set of edges $uv$ in $G$ such that $u$ and $v$ are chosen, but no vertex in $A_{uv}$ is chosen. Let $n^{*}$ and $m^{*}$ be the expected number of vertices and edges in $H$, respectively. By definition, $n^{*} = pn$. For each edge $uv\in E(G)$, the probability that $uv\in E(H)$ equals $p^2(1-p)^{|A_{uv}|}\geq p^2(1-p)^k$. Thus $m^{*} \geqslant p^{2}(1 - p)^{k}m$. Suppose for the sake of contradiction there exists $\{e,f\}\in D^\times$ with $e,f\in E(H)$. Without loss of generality, $(e,f)\in B$. By assumption, some endpoint $x$ of $f$ is in $A_e$. Since $e\in E(H)$, we have $A_e\cap V(H)=\emptyset$, implying $x\not\in V(H)$ and $f\not\in E(H)$, which is a contradiction. Thus, two edges in $H$ may cross only if they are incident to a common vertex. By the Hanani--Tutte Theorem (see \citep{Tutte70}), $H$ is planar. Hence, $|E(H)|\leq 3|V(H)|$, implying $p^{2}(1 - p)^{k}m \leqslant m^{*} \leqslant 3n^{*} = 3pn$ and $m \leqslant \frac{3}{p(1 - p)^{k}}n =d_{k}n$, as desired. 
\end{proof}

Note that $d_k < 3e(k+1)< 9(k+1)$. \cref{ShallowMinorGapCover,ExtremalGapCover} thus imply:

\begin{cor}
\label{GapCoverPlanarExpansion}
For every $k$-gap-cover-planar graph $G$, 
$$\nabla_r(G) \leq d_{(2r+1)k} < 18(r+1)(k+1).$$
\end{cor}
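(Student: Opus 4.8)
The final statement, Corollary \ref{GapCoverPlanarExpansion}, follows essentially by composing the two preceding results, so the plan is short. First I would recall that for any $k$-gap-cover-planar graph $G$ and any integer $r\geq 0$, every $r$-shallow minor $H$ of $G$ is itself $(2r+1)k$-gap-cover-planar, by Lemma \ref{ShallowMinorGapCover}. Then I would apply Lemma \ref{ExtremalGapCover} to $H$, which bounds its density by $d_{(2r+1)k}$. Since $\nabla_r(G)$ is the maximum density over all $r$-shallow minors of $G$, and each such minor has density at most $d_{(2r+1)k}$, we get $\nabla_r(G)\leq d_{(2r+1)k}$.

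The only remaining point is the numerical inequality $d_{(2r+1)k}<18(r+1)(k+1)$. Here I would use the observation recorded just before the corollary, namely $d_\ell<3e(\ell+1)<9(\ell+1)$ for every integer $\ell\geq 0$ (this comes from $(1-\tfrac1{\ell+1})^{-\ell}=(1+\tfrac1\ell)^\ell<e$ for $\ell\geq 1$, and is checked directly for $\ell=0$ where $d_0=3$). Substituting $\ell=(2r+1)k$ gives $d_{(2r+1)k}<9((2r+1)k+1)$. It then suffices to note $(2r+1)k+1\leq(2r+2)(k+1)=2(r+1)(k+1)$, which holds since $(2r+1)k+1\leq(2r+1)k+(2r+1)+k+1=(2r+2)(k+1)$ using $r\geq0$, $k\geq0$. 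Multiplying through yields $d_{(2r+1)k}<9\cdot2(r+1)(k+1)=18(r+1)(k+1)$, as claimed.

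There is essentially no obstacle here; the work has all been done in Lemmas \ref{ShallowMinorGapCover} and \ref{ExtremalGapCover}, and the corollary is a one-line composition plus an elementary estimate. The only thing to be slightly careful about is the edge case $r=0$ or $k=0$ in the numerical bound, and the fact that $\nabla_r$ is defined as a maximum, so it is attained by some $r$-shallow minor to which the density bound applies (and the empty-graph case, where density is $0$, trivially satisfies the inequality).

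\begin{proof}
Let $G$ be a $k$-gap-cover-planar graph and $r\geq 0$ an integer. Let $H$ be any $r$-shallow minor of $G$. By \cref{ShallowMinorGapCover}, $H$ is $(2r+1)k$-gap-cover-planar, so by \cref{ExtremalGapCover} the density of $H$ is at most $d_{(2r+1)k}$. Taking the maximum over all $r$-shallow minors $H$ of $G$, we obtain $\nabla_r(G)\leq d_{(2r+1)k}$. Finally, using $d_\ell<9(\ell+1)$ with $\ell=(2r+1)k$, together with $(2r+1)k+1\leq(2r+2)(k+1)=2(r+1)(k+1)$, we get
$$\nabla_r(G)\leq d_{(2r+1)k}<9\big((2r+1)k+1\big)\leq 18(r+1)(k+1). \qedhere$$
\end{proof}
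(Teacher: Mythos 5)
Your proposal is correct and matches the paper's argument exactly: the corollary is obtained by composing \cref{ShallowMinorGapCover} with \cref{ExtremalGapCover}, and the numerical bound follows from the observation $d_\ell<3e(\ell+1)<9(\ell+1)$ recorded just before the statement. Your explicit verification of $(2r+1)k+1\leq 2(r+1)(k+1)$ is a helpful detail the paper leaves implicit.
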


This result says that for each $k\geq 0$, the class of $k$-gap-cover-planar graphs has linear expansion. Since every $k$-gap-planar graph is $k$-gap-cover-planar, \cref{GapCoverPlanarExpansion} implies and strengthens \cref{LinearExpansion}. It is curious that the only known proof of \cref{LinearExpansion} goes via the more general class of $k$-gap-cover-planar graphs. It is instructive to consider what goes wrong with a direct proof along the lines of \cref{ShallowMinorGapCover}. Consider a $k$-gap-planar drawing of a graph $G$. Consider an edge $xy$ of $G$, where $x$ is the central vertex of a branch set $T_v$ for some $v\in V(H)$. It is possible that many edges of $G$ cross $xy$, and $xy$ is in the path from $x$ to many branch sets $T_w$ with $vw\in E(H)$. Using the  strategy for drawing $H$ in the proof of \cref{ShallowMinorGapCover}, all the edges that previously crossed $xy$ will cross all the edges $vw$ of $H$ that use $xy$. The resulting crossing graph has a large complete bipartite graph, and is therefore not $k'$-gap-planar with bounded $k'$. This example illustrates the usefulness of considering $k$-gap-cover-planar graphs. 

\section{Shallow Topological Minors}
\label{TopoExpansionSection}

We now prove our bound on $\widetilde{\nabla}_r(G)$ for $k$-gap-planar graphs (\cref{LinearTopoExpansion}). The proof is a slight generalisation of an argument by \citet{NOW11}, who showed that $k$-planar graphs $G$ satisfy $\widetilde{\nabla}_r(G)\leq O(\sqrt{kr})$. 

\begin{lem}
\label{SubdivTrick}
If $G$ is $k$-gap-planar and a subgraph $G'$ of $G$ is a $(\leq c)$-subdivision of some graph $H$, then $H$ is $(c+1)k$-gap-planar. 
\end{lem}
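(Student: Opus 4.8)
The plan is to take a $k$-gap-planar drawing $D_0$ of $G$, restrict it to the subgraph $G'$, and then ``contract'' the subdivision vertices to obtain a drawing $D$ of $H$ in which each edge of $H$ is drawn along the curve formed by concatenating the curves of the (at most $c+1$) edges of $G'$ constituting the corresponding subdivided path. Formally, if $e\in E(H)$ corresponds to a path $P_e=(x_0,x_1,\dots,x_\ell)$ in $G'$ with $\ell\le c+1$, embed the vertices of $H$ at the corresponding vertices of $G'$ and draw $e$ as the concatenation of the curves representing $x_0x_1,\dots,x_{\ell-1}x_\ell$. As in the proof of \cref{ShallowMinorGapCover}, a small perturbation makes this a genuine drawing; a crossing between $e,e'\in E(H)$ in $D$ arises (up to perturbation) only from a crossing in $D_0$ between some edge of $P_e$ and some edge of $P_{e'}$ (the degenerate case where $P_e$ and $P_{e'}$ share a vertex cannot produce a crossing of independent edges, since then $e,e'$ share an endpoint of $H$).

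Next I would transfer the charging. Let the bearing (equivalently, the outdegree-$\le k$ orientation of the crossing graph) of $D_0$ charge each crossing in $D_0$ to one of the two involved edges. For a crossing in $D$ between $e$ and $e'$, pick an underlying crossing in $D_0$ between an edge $g\in E(P_e)$ and $g'\in E(P_{e'})$ that witnesses it, and charge the crossing of $D$ to $e$ if that crossing of $D_0$ was charged to $g$, and to $e'$ otherwise. The key estimate is then a counting bound: the number of crossings in $D$ charged to a fixed edge $e\in E(H)$ is at most $\sum_{g\in E(P_e)}(\text{number of crossings in }D_0\text{ charged to }g)\le (c+1)k$, since $|E(P_e)|\le c+1$ and each edge of $G'\subseteq G$ is charged at most $k$ crossings in the original drawing. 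Hence $D$ is a $(c+1)k$-gap-planar drawing of $H$.

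The one genuine subtlety — the step I'd expect to need the most care — is the same perturbation/``no spurious crossings'' argument used in \cref{ShallowMinorGapCover}: when several edges of $H$ route through a common edge $g$ of $G'$, their curves run parallel inside the thin tube $C_g$ around $g$, and one must ensure (i) that two such parallel curves do not cross each other inside the tube unless they share an endpoint of $H$ (so they are not independent and contribute nothing to the $\ltimes$-count), and (ii) that a curve passing through $C_g$ only crosses curves passing through $C_{g'}$ for edges $g'$ that actually cross $g$ in $D_0$, plus the harmless incidences at shared endpoints. This is handled exactly as before: choose $\epsilon$ small enough that the only edges of $D_0$ meeting the $\epsilon$-neighbourhood $C_g$ of an edge $g$ are those crossing $g$ or incident to an endpoint of $g$, and route the parallel strands through each tube in a fixed linear order so that any two strands cross at most once and only when forced by the ambient crossing structure or by a shared endpoint.

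Modulo that bookkeeping, the proof is short: it is \cref{ShallowMinorGapCover} specialised to the case where each branch set $\mu(v)$ is a single vertex (so $r=0$ in spirit) but each ``connecting path'' may have length up to $c+1$ rather than $2r+1$, and where the cover/bearing structure is simply the original $k$-gap-planar charging rather than a cover of size $k$. I would present it self-containedly but point out the parallel with \cref{ShallowMinorGapCover}, and note that one may assume $H$ has no isolated vertices and that $G'$ is exactly the subdivision (discarding unused vertices and edges of $G$), which only helps.
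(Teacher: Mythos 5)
Your proof is correct and follows essentially the same route as the paper: restrict the drawing to $G'$, draw each edge of $H$ along the concatenation of the curves of its subdivision path, and transfer the original charging so that each edge of $H$ inherits at most $(c+1)k$ charged crossings. The one place you over-engineer is the ``parallel strands in a tube'' worry: since the subdivision paths $P_e$ are internally disjoint and edge-disjoint, no two edges of $H$ route through a common edge of $G'$, so that perturbation argument is not needed here.
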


\begin{proof}
Consider a $k$-gap-planar drawing of $G$. Delete vertices or edges of $G$ not in $G'$. Consider each edge $vw\in E(H)$, which corresponds to a path $v=x_0,x_1,\dots,x_d,x_{d+1}=w$ in $G$, where $d\leq c$. Each of $x_1,\dots,x_d$ is used by no other edge of $H$. Draw $vw$ by following the path $v,x_1,\dots,x_d,w$. We obtain a drawing of $H$. Using the above notation, for each crossing in $G$ charged to $x_ix_{i+1}$, charge this crossing to $vw$. So $vw$ is charged for at most $(d+1)k\leq (c+1)k$ crossings. Hence $H$ is $(c+1)k$-gap-planar. 
\end{proof}


\begin{proof}[Proof of \cref{LinearTopoExpansion}]
Say a graph $H$ is an $r$-shallow topological minor of a $k$-gap-planar graph $G$. That is, a $(\leq 2r)$-subdivision $G'$ of $H$ is a subgraph of $G$. By \cref{SubdivTrick}, $H$ is $(2r+1)k$-gap-planar.  
By \cref{Extremal}, $H$ has density at most $8\sqrt{(2r+1)(k+1)}$. 
Thus $\widetilde{\nabla}_r(G)\leq 8\sqrt{(2r+1)(k+1)}$.
\end{proof}

Since every $r$-shallow topological minor is an  $r$-shallow minor, 
\cref{ShallowMinorGapCover} implies:

\begin{cor}
\label{GapCoverTopoMinor}
Every $r$-shallow topological minor of a $k$-gap-cover-planar graph is $(2r+1)k$-gap-cover-planar.
\end{cor}

\cref{ExtremalGapCover,GapCoverTopoMinor} imply:

\begin{cor}
\label{GapCoverTopoExpansion}
For every $k$-gap-cover-planar graph $G$, $$\widetilde{\nabla}_r(G)\leq d_{(2r+1)k} < 18(r+1)(k+1).$$
\end{cor}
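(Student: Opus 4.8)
This statement is an immediate corollary of two results proved earlier in the excerpt, so the plan is to simply combine them and then do the arithmetic bound on $d_{(2r+1)k}$.

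The approach: Let $G$ be a $k$-gap-cover-planar graph and let $H$ be an $r$-shallow topological minor of $G$. First I would invoke \cref{GapCoverTopoMinor}, which states that every $r$-shallow topological minor of a $k$-gap-cover-planar graph is $(2r+1)k$-gap-cover-planar; hence $H$ is $(2r+1)k$-gap-cover-planar. Second, I would apply \cref{ExtremalGapCover}, which bounds the density of any $k'$-gap-cover-planar graph by $d_{k'}$; taking $k'=(2r+1)k$ gives that $H$ has density at most $d_{(2r+1)k}$. Since this holds for every $r$-shallow topological minor $H$ of $G$, we conclude $\widetilde{\nabla}_r(G) \leq d_{(2r+1)k}$.

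Finally, for the numerical bound, recall the remark just after \cref{ExtremalGapCover} that $d_k < 3e(k+1) < 9(k+1)$ for all $k\geq 0$. Applying this with parameter $(2r+1)k$ in place of $k$ gives $d_{(2r+1)k} < 9((2r+1)k+1)$, and since $(2r+1)k+1 \leq 2(r+1)(k+1)$ for all integers $r,k\geq 0$ (one checks $(2r+1)k+1 \leq (2r+2)(k+1) = 2rk+2r+2k+2$ is equivalent to $0 \leq 2r+k+1$, which is clear), we obtain $d_{(2r+1)k} < 18(r+1)(k+1)$, as claimed.

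There is essentially no obstacle here: the work was done in \cref{GapCoverTopoMinor} (itself a consequence of \cref{ShallowMinorGapCover} together with the observation that every $r$-shallow topological minor is an $r$-shallow minor) and \cref{ExtremalGapCover}. The only thing requiring a moment's care is the elementary inequality $(2r+1)k+1 \leq 2(r+1)(k+1)$ used to repackage the bound into the stated clean form; this is a triviality and needs no more than the one-line verification above.
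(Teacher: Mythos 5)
Your proposal is correct and follows exactly the route the paper takes: combine \cref{GapCoverTopoMinor} with \cref{ExtremalGapCover}, then use $d_{k'}<9(k'+1)$ and the elementary inequality $(2r+1)k+1\leq 2(r+1)(k+1)$ to get the stated bound. Your arithmetic checks out, so there is nothing to add.
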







\section{Surface Extensions}
\label{Surfaces}

The above results generalise for drawings on arbitrary surfaces. A surface $\Sigma$ obtained from a sphere by adding $c$ cross-caps and $h$ handles has \defn{Euler genus} $c+2h$. The definition of `drawing in the plane' generalises for `drawing in a surface $\Sigma$'. The \defn{Euler-genus $\eg(G)$} of a graph $G$ is the minimum Euler genus of a surface in which $G$ has a drawing with no crossings (see \citep{MoharThom}). The definitions of `$k$-planar drawing', `$k$-gap-planar drawing', `$k$-cover-planar drawing' and `$k$-gap-cover-planar drawing' generalise for any surface. 
A graph $G$ is \defn{$(g,k)$-planar} if $G$ has a $k$-planar drawing in a surface of Euler genus at most $g$ (see \citep{DEW17}).
A graph $G$ is \defn{$(g,k)$-gap-planar} if $G$ has a $k$-gap-planar drawing in a surface of Euler genus at most $g$. 
A graph $G$ is \defn{$(g,k)$-cover-planar} if $G$ has a $k$-cover-planar drawing in a surface of Euler genus at most $g$.
A graph $G$ is \defn{$(g,k)$-gap-cover-planar} if $G$ has a $k$-gap-cover-planar drawing in a surface of Euler genus at most $g$. The $g=0$ case corresponds to drawings in the plane. 

For an integer $g\geq 0$, let 
$c_g := \max\{ 3, \tfrac14 ( 5 + \sqrt{24g+1})\} \leq \sqrt{2g+9}$.  
\citet[Lemma~4.6]{OOW19} showed that if an $n$-vertex $m$-edge graph $G$ has a drawing in a surface of Euler genus $g$ with at most $km$ crossings, then $m\leq \sqrt{8k+4}\,c_g n$. The proof is analogous to \cref{Extremal} using an extension of the Crossing Lemma for surfaces. This result is applicable for $(g,k)$-gap-planar graphs. The next lemma follows.

\begin{lem}
\label{ExtremalSurface}
Every $(g,k)$-gap-planar graph has density at most $\sqrt{(8k+4)(2g+9)}$. 
\end{lem}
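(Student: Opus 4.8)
The plan is to mirror the proof of \cref{Extremal} almost verbatim, substituting the surface version of the Crossing Lemma for the planar one, and then absorb the bookkeeping into the claimed constant. Concretely, fix a $(g,k)$-gap-planar drawing of a graph $G$ in a surface of Euler genus at most $g$, and let $n := |V(G)|$, $m := |E(G)|$. By definition of $k$-gap-planar, the number of crossings $c$ satisfies $c \le km$. The cited result of \citet[Lemma~4.6]{OOW19} says exactly that such a drawing forces $m \le \sqrt{8k+4}\,c_g\,n$, so once we know that lemma applies to $(g,k)$-gap-planar graphs — which it does, since a $(g,k)$-gap-planar drawing is precisely a drawing in a genus-$\le g$ surface with at most $km$ crossings — we are essentially done modulo simplifying the constant.

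First I would recall the definition $c_g := \max\{3, \tfrac14(5+\sqrt{24g+1})\}$ and note the stated inequality $c_g \le \sqrt{2g+9}$; a one-line check suffices, bounding the two cases of the maximum separately (for the $3$ branch, $3 \le \sqrt{2g+9}$ since $2g+9 \ge 9$; for the other branch, square both sides and verify $\tfrac1{16}(5+\sqrt{24g+1})^2 \le 2g+9$ using $(5+\sqrt{24g+1})^2 = 25 + 10\sqrt{24g+1} + 24g+1 = 24g+26+10\sqrt{24g+1}$, and $\sqrt{24g+1} \le 2g+4$ for $g\ge 0$ since $(2g+4)^2 = 4g^2+16g+16 \ge 24g+1 \iff 4g^2 \ge 8g - 15$, which holds for all $g\ge 0$). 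Then substitute into the \citet{OOW19} bound: $m \le \sqrt{8k+4}\,c_g\,n \le \sqrt{8k+4}\,\sqrt{2g+9}\,n = \sqrt{(8k+4)(2g+9)}\,n$. Dividing by $n$ gives the density bound.

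The step I expect to be the only real obstacle is confirming that the \citet{OOW19} lemma is genuinely applicable here in the form stated — i.e., that "$(g,k)$-gap-planar" implies "has a drawing in a genus-$\le g$ surface with at most $km$ crossings". This is immediate from the definitions given in \cref{Surfaces}, and indeed the excerpt already asserts "This result is applicable for $(g,k)$-gap-planar graphs", so the remaining work is purely the constant-chasing described above. The $g=0$ specialisation also sanity-checks against \cref{Extremal}: there $c_0 = 3$, giving $m \le 3\sqrt{8k+4}\,n \le 3\sqrt{12(k+1)}\,n = 6\sqrt{3}\sqrt{k+1}\,n < 8\sqrt{k+1}\,n$ wait — $6\sqrt3 \approx 10.39 > 8$, so \cref{Extremal} with its constant $8$ is in fact the sharper planar bound, which is consistent since the surface lemma is stated with non-optimised constants; no contradiction arises because \cref{ExtremalSurface} only claims the weaker $\sqrt{(8k+4)\cdot 9} = 3\sqrt{8k+4}$ at $g=0$. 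So the proof is short: quote \citet[Lemma~4.6]{OOW19}, apply $c_g \le \sqrt{2g+9}$, combine with $c \le km$, and read off $m/n \le \sqrt{(8k+4)(2g+9)}$.
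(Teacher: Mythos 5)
Your overall route is exactly the paper's: the lemma is obtained by quoting \citet[Lemma~4.6]{OOW19} (a drawing in a surface of Euler genus $g$ with at most $km$ crossings forces $m\le\sqrt{8k+4}\,c_g n$), observing that a $(g,k)$-gap-planar drawing is such a drawing, and then absorbing $c_g$ into the constant via $c_g\le\sqrt{2g+9}$. The paper does no more than this, so in substance your proof is the intended one, and your $g=0$ sanity check is a nice touch.

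There is, however, one step that fails as written: your verification of $c_g\le\sqrt{2g+9}$ on the branch $c_g=\tfrac14(5+\sqrt{24g+1})$. You reduce to showing $24g+26+10\sqrt{24g+1}\le 16(2g+9)=32g+144$ and then invoke $\sqrt{24g+1}\le 2g+4$. But that substitution gives the upper bound $24g+26+10(2g+4)=44g+66$, and $44g+66\le 32g+144$ holds only for $g\le 6$; for $g\ge 7$ your chain of inequalities breaks down (the bound $2g+4$ is far too generous for large $g$, where $\sqrt{24g+1}=\Theta(\sqrt{g})$). The target inequality is nevertheless true for all $g\ge 0$: it is equivalent to $10\sqrt{24g+1}\le 8g+118$, and squaring gives $2400g+100\le 64g^2+1888g+13924$, i.e.\ $0\le 64g^2-512g+13824$, whose discriminant $512^2-4\cdot 64\cdot 13824$ is negative. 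Replace your linearisation of $\sqrt{24g+1}$ with this direct squaring (or with a linear bound of the correct slope, e.g.\ $\sqrt{24g+1}\le \tfrac45 g+\tfrac{59}{5}$) and the proof is complete.
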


Using the fact that every $n$-vertex graph with Euler genus $g$ has treewidth at most $2\sqrt{(2g+3)n}$ \citep{DMW17}, the proof of \cref{Treewidth} generalises to show the following:

\begin{lem}
\label{gkGapTreewidth}
For every $(g,k)$-gap-planar graph $G$ on $n$ vertices, 
$$\tw(G)\leq 4(2g+3)^{1/2} (2k+1)^{3/4}n^{1/2}.$$
\end{lem}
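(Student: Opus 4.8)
The plan is to mimic the proof of \cref{Treewidth} essentially verbatim, replacing the planar ingredients by their Euler-genus analogues. First I would fix a $(g,k)$-gap-planar drawing of $G$ in a surface $\Sigma$ of Euler genus at most $g$, with $c$ crossings, and set $m:=|E(G)|$; since the drawing is $k$-gap-planar we have $c\le km$. I would then form the graph $G'$ obtained by planting a dummy vertex at each of the $c$ crossing points, so that $G'$ is drawn without crossings in $\Sigma$ and hence $\eg(G')\le g$, and $G'$ has $n':=n+c$ vertices.

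Next I would bound $n'$ using \cref{ExtremalSurface}: since $c\le km$ and $m\le\sqrt{(8k+4)(2g+9)}\,n$, we get $n'=n+c\le n+km\le n+k\sqrt{(8k+4)(2g+9)}\,n$, and I would absorb this into a clean bound of the form $n'\le C(g,k)\,n$ for an explicit $C(g,k)$ (something like $n'\le 4(2g+3)^{1/2}(2k+1)^{3/2}n$ after crude estimates, chosen so the arithmetic at the end works out to the stated constant). Then apply the cited bound that every $n'$-vertex graph of Euler genus at most $g$ has treewidth at most $2\sqrt{(2g+3)n'}$ to get $\tw(G')\le 2\sqrt{(2g+3)n'}$.

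Finally I would transfer a tree-decomposition $\TT$ of $G'$ of this width back to $G$ exactly as in \cref{Treewidth}: for each dummy vertex $z$ sitting at the crossing of (oriented) edges $\overrightarrow{vw}$ and $\overrightarrow{ab}$ of $G$ (fix an arbitrary edge-orientation of $G$ first), replace every occurrence of $z$ in a bag by the pair $\{v,a\}$. The result is a tree-decomposition of $G$ whose width is at most twice that of $\TT$, i.e.\ at most $4\sqrt{(2g+3)n'}$, and then substituting the bound on $n'$ and simplifying yields $\tw(G)\le 4(2g+3)^{1/2}(2k+1)^{3/4}n^{1/2}$.

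The only real work, and the main obstacle, is the bookkeeping in the second step: choosing the crude over-estimates for $n'$ so that $4\sqrt{(2g+3)n'}$ collapses neatly into the target expression $4(2g+3)^{1/2}(2k+1)^{3/4}n^{1/2}$, in particular checking that $(8k+4)^{1/2}$-type factors combine with the extra $n$ term to give exactly $(2k+1)^{3/4}$ and that the $(2g+3)$ and $(2g+9)$ powers match up; since the paper uses deliberately non-optimised constants, any valid chain of inequalities of this shape suffices, so no genuine difficulty is expected beyond routine arithmetic.
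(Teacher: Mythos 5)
Your approach is exactly the one the paper intends (the paper itself only says that ``the proof of \cref{Treewidth} generalises''): subdivide at crossings, bound $n'=n+c$ via $c\le km$ and the density bound of \cref{ExtremalSurface}, apply the Euler-genus treewidth bound $2\sqrt{(2g+3)n'}$, and double the bags when replacing dummy vertices. So structurally there is nothing to object to.

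However, the step you yourself flag as ``the only real work'' does not close, and this is a genuine gap rather than routine arithmetic. From \cref{ExtremalSurface} you get $m\le 2\sqrt{(2k+1)(2g+9)}\,n$, hence $n'\le n+km\le \bigl(1+2k\sqrt{(2k+1)(2g+9)}\bigr)n\le 2(2k+1)^{3/2}(2g+9)^{1/2}n$, and therefore $4\sqrt{(2g+3)n'}\le 4\sqrt{2}\,(2g+3)^{1/2}(2g+9)^{1/4}(2k+1)^{3/4}n^{1/2}$. The extra factor $(2g+9)^{1/4}$ (and the $\sqrt{2}$) cannot be absorbed: to reach the stated $4(2g+3)^{1/2}(2k+1)^{3/4}n^{1/2}$ you would need $n'\le(2k+1)^{3/2}n$, i.e.\ $1+2k\sqrt{(2k+1)(2g+9)}\le(2k+1)^{3/2}$, which already fails at $g=0$, $k=1$ (about $11.4$ versus $5.2$). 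Your trial estimate $n'\le 4(2g+3)^{1/2}(2k+1)^{3/2}n$ likewise yields $8(2g+3)^{3/4}(2k+1)^{3/4}n^{1/2}$, not the target. In short, this argument honestly proves a bound with genus dependence of order $(2g+9)^{3/4}$, not $(2g+3)^{1/2}$; either you need a genuinely better bound on the number of crossings $c$ (not available from $c\le km$ together with \cref{ExtremalSurface}), or the constant in the statement of \cref{gkGapTreewidth} must be weakened accordingly. You should not present the computation as if it collapses to the stated expression.
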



The next lemma is proved in exactly the same way as \cref{SubdivTrick}.


\begin{lem}
\label{SubdivTrickSurface}
If a graph $G$ is $(g,k)$-gap-planar and a subgraph of $G$ is a $(\leq c)$-subdivision of some graph $H$, then $H$ is $(g,(c+1)k)$-gap-planar. 
\end{lem}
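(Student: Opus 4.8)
The plan is to mimic the proof of \cref{SubdivTrick} verbatim, since passing from a surface of Euler genus $g$ to the same surface does not interact with the crossing-charging argument at all. First I would fix a $k$-gap-planar drawing of $G$ in a surface $\Sigma$ of Euler genus at most $g$; such a drawing exists because $G$ is $(g,k)$-gap-planar. Delete from this drawing all vertices and edges of $G$ that are not in the subgraph $G'$ which is a $(\leq c)$-subdivision of $H$; the resulting drawing of $G'$ still lies in $\Sigma$, and the charging witnessing $k$-gap-planarity restricts to a valid charging for $G'$.

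Next I would build the drawing of $H$ in $\Sigma$. Each edge $vw\in E(H)$ corresponds, under the subdivision, to a path $v=x_0,x_1,\dots,x_d,x_{d+1}=w$ in $G'$ with $d\leq c$, where the internal vertices $x_1,\dots,x_d$ are division vertices used by no other edge of $H$. Draw $vw$ by concatenating the curves of the path edges $x_0x_1,\dots,x_dx_{d+1}$; routing the new edge along these arcs (with a tiny perturbation near the division vertices so that no three edges meet at a point) produces a genuine drawing of $H$ in $\Sigma$. A crossing in this drawing of $H$ between $vw$ and another edge $v'w'$ comes from a crossing in the original drawing between some path edge $x_ix_{i+1}$ of $vw$'s path and some path edge of $v'w'$'s path. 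For each such crossing I would charge it to $vw$ exactly when the original charging charged the corresponding crossing (of $x_ix_{i+1}$ with the other path edge) to $x_ix_{i+1}$; by symmetry every crossing of $H$ is charged to one of its two edges. Since $vw$'s path has at most $d+1\leq c+1$ edges and each was charged at most $k$ crossings in the original drawing, $vw$ is charged at most $(c+1)k$ crossings. Hence $H$ has a $(c+1)k$-gap-planar drawing in $\Sigma$, so $H$ is $(g,(c+1)k)$-gap-planar.

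There is no real obstacle here: the only mild subtlety, already present in \cref{SubdivTrick}, is making sure the concatenated curves still form a valid drawing (edges represented by non-self-intersecting curves, no three edges crossing at a point), which is handled by a small local perturbation near each division vertex, and that distinct edges of $H$ use distinct internal vertices, which holds because $G'$ is a subdivision of $H$. The Euler genus plays a purely passive role — everything happens inside the fixed surface $\Sigma$ — so the argument of \cref{SubdivTrick} transfers with $\Sigma$ replacing the plane throughout. Since the proof is routine, I would simply write ``The proof is identical to that of \cref{SubdivTrick}, working in a surface of Euler genus at most $g$ instead of the plane.''
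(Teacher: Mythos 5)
Your proposal is correct and matches the paper exactly: the paper itself states that \cref{SubdivTrickSurface} ``is proved in exactly the same way as \cref{SubdivTrick}'', with the surface $\Sigma$ playing a purely passive role. Your elaboration of the path-concatenation and crossing-charging details is a faithful transcription of that argument.
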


\begin{thm}
\label{LinearTopoExpansionSurface}
For every $(g,k)$-gap-planar graph $G$, 
 $$\widetilde{\nabla}_r(G)\leq \sqrt{(2r+1)(8k+4)(2g+9)} .$$
\end{thm}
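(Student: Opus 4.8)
The plan is to mimic the proof of \cref{LinearTopoExpansion} exactly, substituting the surface analogues of each ingredient. First I would take a graph $H$ that is an $r$-shallow topological minor of a $(g,k)$-gap-planar graph $G$; by definition this means some $(\leq 2r)$-subdivision $G'$ of $H$ is a subgraph of $G$. Since $G$ is $(g,k)$-gap-planar, so is its subgraph $G'$ (restricting the drawing and the charging). Applying \cref{SubdivTrickSurface} with $c=2r$ then shows that $H$ is $(g,(2r+1)k)$-gap-planar.

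Next I would bound the density of $H$ using \cref{ExtremalSurface}. That lemma states every $(g,k')$-gap-planar graph has density at most $\sqrt{(8k'+4)(2g+9)}$; applying it with $k' = (2r+1)k$ gives density at most $\sqrt{(8(2r+1)k+4)(2g+9)}$. It remains only to check that $8(2r+1)k + 4 \leq (2r+1)(8k+4)$, which holds because $(2r+1)(8k+4) - (8(2r+1)k + 4) = 4(2r+1) - 4 = 8r \geq 0$. Hence the density of $H$ is at most $\sqrt{(2r+1)(8k+4)(2g+9)}$. Since $H$ was an arbitrary $r$-shallow topological minor of $G$, this gives $\widetilde{\nabla}_r(G) \leq \sqrt{(2r+1)(8k+4)(2g+9)}$, as required.

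There is essentially no obstacle here: the entire argument is a three-line chain of applications of already-established lemmas (\cref{SubdivTrickSurface} and \cref{ExtremalSurface}), together with the trivial inequality $8(2r+1)k+4 \leq (2r+1)(8k+4)$. The only point requiring a moment's care is the slight slack in that last inequality, which is what lets the final bound be written cleanly with the factor $(2r+1)$ pulled out front rather than carrying a $+4$ inside. Setting $g=0$ recovers \cref{LinearTopoExpansion} up to this same harmless weakening of the constant.
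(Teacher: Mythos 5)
Your proposal is correct and follows exactly the same route as the paper: apply \cref{SubdivTrickSurface} with $c=2r$ to conclude $H$ is $(g,(2r+1)k)$-gap-planar, then apply \cref{ExtremalSurface} and absorb the additive $+4$ via $8(2r+1)k+4\leq(2r+1)(8k+4)$. Your explicit verification of that last inequality is the only addition; the paper states it without comment.
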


\begin{proof}
Say a graph $H$ is an $r$-shallow topological minor of $G$. That is, a $(\leq 2r)$-subdivision $G'$ of $H$ is a subgraph of $G$. By \cref{SubdivTrickSurface}, $H$ is $(g,(2r+1)k)$-gap-planar.  By \cref{ExtremalSurface}, $H$ has density at most 
$\sqrt{(8(2r+1)k+4)(2g+9)}
\leq \sqrt{(2r+1)(8k+4)(2g+9)}$. The result follows.
\end{proof}

The next lemma is proved in exactly the same way as \cref{ShallowMinorGapCover}.

\begin{lem}
\label{ShallowMinorGapCoverSurface}
Every $r$-shallow minor of a $(g,k)$-gap-cover-planar graph is 
$(g,(2r+1)k)$-gap-cover-planar.
\end{lem}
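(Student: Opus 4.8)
The plan is to replicate the proof of \cref{ShallowMinorGapCover} verbatim, checking that nothing in that argument relied on the host surface being the plane. First I would fix a $k$-gap-cover-planar drawing $D_0$ of the $(g,k)$-gap-cover-planar graph $G$ in a surface $\Sigma$ of Euler genus at most $g$, together with a bearing $B_0$ of $D_0$ and, for each edge $vw\in E(G)$, a $B_0$-cover $A_{vw}$ of $vw$ in $G-v-w$ of size at most $k$. Given an $r$-shallow model $\mu$ of a graph $H$ in $G$, I would embed each $v\in V(H)$ at a centre $v_0$ of $\mu(v)$ (which exists since $\mu(v)$ has radius at most $r$), and then route each edge $vw\in E(H)$ along a short walk $P_{vw}=(v_0,\dots,v_a,w_b,\dots,w_0)$ through $\mu(v)$ and $\mu(w)$ with $a,b\le r$, inside a thin tube $\bigcup_i C_{v_iv_{i+1}}\cup C_{v_aw_b}\cup\bigcup_j C_{w_{j+1}w_j}$ around that walk. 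The key local observation — that for a suitably small $\epsilon>0$ the only edges intersecting $C_e$ are those incident to an endpoint of $e$ or crossing $e$ — is a statement about the local topology of a curve in a surface, so it transfers unchanged.

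Next I would build the bearing $B$ of the drawing $D$ of $H$ exactly as before: for $\{e,f\}\in D^\times$, for every pair $e_0\in E(P_e)$, $f_0\in E(P_f)$ with $\{e_0,f_0\}\in D_0^\times$, push the orientation of $(e_0,f_0)$ or $(f_0,e_0)$ present in $B_0$ up to $(e,f)$ or $(f,e)$; since a crossing of $e$ and $f$ forces a crossing of some such $e_0,f_0$, $B$ is a bearing. Then I would set $A''_{vw}$ to be the union of the $A'_{e_0}$ over edges $e_0$ of $P_{vw}$, where $A'_{e_0}$ is the set of $v'\in V(H)$ with $V(\mu(v'))\cap A_{e_0}\ne\emptyset$; since the $\mu(v')$ are disjoint, $|A'_{e_0}|\le|A_{e_0}|\le k$, hence $|A''_{vw}|\le(2r+1)k$ because $P_{vw}$ has at most $2r+1$ edges. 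Finally I would verify $A''_e$ is a $B$-cover of $e=vw$: if $(e,e')\in D^\times$ then either $P_e$ and $P_{e'}$ share a vertex — forcing a common endpoint of $e$ and $e'$, contradicting independence of crossing edges — or some edge $xy$ of $P_e$ and $x'y'$ of $P_{e'}$ have $(xy,x'y')\in B_0$, so some endpoint $x'\in A_{xy}$, and $x'$ lies in $\mu(v')$ or $\mu(w')$, putting $v'$ or $w'$ into $A''_e$.

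I do not expect a genuine obstacle here: the entire argument of \cref{ShallowMinorGapCover} is combinatorial-topological in a way that is indifferent to the genus of the ambient surface, and the Euler genus is only used through the extremal bound \cref{ExtremalSurface}, which is invoked afterwards (e.g.\ to deduce a density bound), not inside this lemma. The one point that deserves a sentence of care is the existence of the thinning radius $\epsilon$: one should note that a drawing in a surface has finitely many crossings and edges, each edge a compact curve, so positive distances between disjoint compact sets give the required $\epsilon$ exactly as in the plane. Accordingly the proof can be written as: \enquote{The proof is identical to that of \cref{ShallowMinorGapCover}, working in a surface of Euler genus at most $g$ throughout; the only place the plane was used there was the choice of a thinning radius $\epsilon$, which is equally available for a drawing in any surface.}
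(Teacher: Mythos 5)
Your proposal is correct and matches the paper exactly: the paper's own justification for this lemma is the single sentence that it ``is proved in exactly the same way as'' the planar version, and your reproduction of that argument, together with the observation that only the choice of the thinning radius $\epsilon$ could conceivably depend on the ambient surface (and does not), is precisely the intended reasoning.
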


Recall that the proof of \cref{ExtremalGapCover} depends on the Hanani-Tutte Theorem, which does not generalise for all surfaces~\citep{FK19}. However, the proof of \cref{ExtremalGapCover} does not require the full power of the Hanani-Tutte Theorem. It only requires that if a  graph $G$ has a drawing in the plane such that no independent edges cross, then $G$ has density at most 3. We now show this fact generalises for any surface.

For a graph $G$, let \defn{$\eg^*(G)$} be the minimum Euler genus of a surface in which $G$ has a drawing such that no independent edges cross (that is, if two edges cross then they are incident to a common vertex). 

\begin{lem}
\label{MinorClosed}
For any integer $g\geq 0$, the class of graphs $G$ with $\eg^*(G)\leq g$ is minor-closed.
\end{lem}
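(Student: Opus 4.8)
The plan is to verify the three minor operations—edge deletion, vertex deletion, and edge contraction—each preserve the property $\eg^*(G) \le g$, since a class closed under all three is minor-closed. Fix a graph $G$ with $\eg^*(G) \le g$ and a witnessing drawing $D$ in a surface $\Sigma$ of Euler genus at most $g$ in which any two crossing edges share an endpoint.

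Edge deletion and vertex deletion are immediate: deleting an edge from $D$ (erasing its curve) or a vertex (erasing the point and all incident edge-curves) yields a drawing of the smaller graph in the same surface, and removing curves can only destroy crossings, never create one, so the ``independent edges don't cross'' property is inherited. Hence $\eg^*(G-e) \le g$ and $\eg^*(G-v) \le g$.

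The real work is edge contraction. Suppose we contract $e = uv$ to a new vertex $z$. I would build a drawing $D'$ of $G/e$ by ``sliding'' $u$ along the curve of $e$ toward $v$: formally, pick a point $z$ on the curve representing $e$ very close to $v$, place the contracted vertex there, reroute every edge formerly incident to $u$ so that it follows (a thin tube around) the portion of $e$ from $u$ to $z$ and then continues as before, and delete the rest of $e$. Taking the tube thin enough, the only new crossings are between two edges that both ran into $u$, or between an old $u$-edge and an old $v$-edge near $z$—in every such case the two edges involved are incident to the common vertex $z$ in $G/e$, so they are not independent. Crossings not involving rerouted edges are unchanged. One must also check that no new crossing arises between a rerouted $u$-edge and an edge $f$ that crossed the arc of $e$ between $u$ and $z$ in $D$: but such an $f$ crossed $e$, hence (by the hypothesis on $D$) was incident to $u$ or $v$, so $f$ is incident to $z$ in $G/e$ and again not independent of the rerouted edge. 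The surface is unchanged, so $\eg^*(G/e) \le g$.

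The main obstacle is making the contraction argument airtight: one must confirm that the tube can be made narrow enough to avoid all \emph{unwanted} crossings (those between an independent pair), and carefully enumerate the cases for which pairs of edges can newly cross after rerouting, checking each such pair shares the vertex $z$. This is the standard ``drawings are closed under contraction along an edge'' deformation argument; the only subtlety beyond the planar case is that it takes place in an arbitrary surface, but since the deformation is entirely local (supported in a small disc neighbourhood of the curve of $e$, which exists because $\Sigma$ is a surface), the genus plays no role and the plane argument transfers verbatim.
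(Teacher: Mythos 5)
Your proposal is correct and follows essentially the same route as the paper: deletions are trivial, and the contraction is handled by a local rerouting of the edges at one endpoint of $uv$ along (a thin neighbourhood of) the curve of $uv$, using the key observation that any edge crossing $uv$ is already incident to $u$ or $v$ and hence to the contracted vertex $z$, so no crossing between independent edges is created. The paper phrases this with a disc containing the whole curve of $vw$ and places $z$ inside it, but this is the same standard deformation argument.
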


\begin{proof}
    Say $\eg^*(G)\leq g$. So $G$ has a drawing in a surface $\Sigma$ with Euler genus at most $g$ such that no independent edges cross. Edge deletions and vertex deletions maintain this property. Let \defn{$G/vw$} be the graph obtained from $G$ by contracting edge $vw\in E(G)$. We now show that $\eg^*(G/vw)\leq g$. Let $D$ be a disc in $\Sigma$ containing $vw$, such that the only edges that intersect $D$ are incident to $v$ or $w$, or cross $vw$. Consider an edge $xy\in E(G)$ that crosses $vw$. Without loss of generality, $y\in\{v,w\}$ and $x\not\in\{v,w\}$. Starting at $x$, let $p$ be the first point on $xy$ that is also in $D$. Delete the portion of $xy$ between $p$ and $y$ (keeping points in the intersection of $xy$ and other edges). Now contract $vw$ into a new vertex $z$, and embed $z$ in the interior of $D$. For each edge $wx$ (or $vx$) in $G$, embed edge $zx$ within $D$ plus the portion of $wx$ (or $vx$) outside of $D$ in the original embedding, as illustrated in \cref{Contracting}. If two edges $e_1$ and $e_2$ cross in this drawing of $G/vw$, then $e_1$ and $e_2$ are incident to $z$, or the edges in $G$ corresponding to $e_1$ and $e_2$ cross in the drawing of $G$, implying that $e_1$ and $e_2$ are incident to a common vertex. Thus we obtain a drawing of $G/vw$ in $\Sigma$ such that no independent edges cross. So the class of graphs $G$ with $\eg^*(G)\leq g$ is minor-closed. 
\end{proof}

\begin{figure}[!h]
\includegraphics{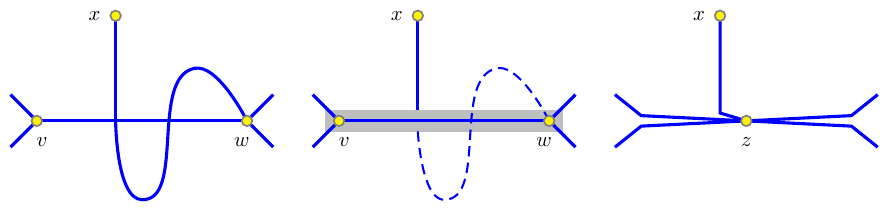}
\caption{Contracting the edge $vw$ in the proof of \cref{MinorClosed}.}
\label{Contracting}
\end{figure}

For a graph $G$, \citet{FK22} defined \defn{$\eg_0(G)$} to be the minimum Euler genus of a surface in which $G$ has a drawing such that any two independent edges cross an even number of times. By definition, every graph $H$ satisfies $\eg_0(H)\leq \eg^*(H) \leq \eg(H)$. \citet{FK22} showed that $\eg_0(K_{3,t}) = \eg(K_{3,t}) = \ceil{\frac{t-2}{2}}$. Thus 
\begin{equation}
    \label{K3t}
    \eg^*(K_{3,t})=\ceil*{\tfrac{t-2}{2}}.
\end{equation}

\begin{lem}
\label{DensityBlahEulerGenus}
Every graph $G$ has density at most $\eg^*(G)+ 3152$.
\end{lem}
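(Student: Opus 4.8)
The plan is to show that a graph $G$ with $\eg^*(G) \le g$ cannot be too dense by finding, inside any sufficiently dense graph, a structure that forces a large $K_{3,t}$-minor, and then invoking \cref{MinorClosed} together with the exact value $\eg^*(K_{3,t}) = \ceil{(t-2)/2}$ from \eqref{K3t}. Concretely, suppose $G$ has density $D$, i.e.\ at least $Dn$ edges on $n$ vertices. A graph of density $D$ has a subgraph of minimum degree at least $D$, and hence (by standard extremal results, e.g.\ Kostochka's or Thomason's bound on the average degree forcing a $K_t$-minor, or more directly Mader-type bounds) contains a fairly large clique minor or, what we really want, a large complete bipartite minor $K_{3,t}$. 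The cleanest route is: high enough average degree forces $K_{s,s}$ as a minor for $s$ growing with the density (this follows from the clique-minor bounds, since $K_{2s}\minorof G$ implies $K_{s,s}\minorof G$), and in particular forces $K_{3,t}$ as a minor with $t$ roughly linear in the density. Then $\eg^*(G) \ge \eg^*(K_{3,t}) = \ceil{(t-2)/2} \ge (t-2)/2$ by \cref{MinorClosed} and \eqref{K3t}, so $t \le 2\,\eg^*(G) + 2$, which caps the density.

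First I would make precise the implication ``density $\ge D$ $\Rightarrow$ $K_{3,t}\minorof G$ with $t \ge \alpha D - \beta$'' for explicit absolute constants $\alpha,\beta$. The simplest packaging: if $G$ has average degree $\ge d$ then $K_{\lfloor c d/\sqrt{\log d}\rfloor}$ is a minor of $G$ (Kostochka/Thomason), and a clique $K_{2m}$ trivially contains $K_{m,m}$, hence $K_{3,m}$, as a (topological even) minor once $m \ge 3$. To get a clean linear-in-density bound and avoid the $\sqrt{\log}$ loss I would instead use a direct argument tailored to $K_{3,t}$: a graph of density $D$ has a subgraph $H$ of minimum degree $\ge D$; pick any three vertices and greedily route internally-disjoint paths / contract branch sets to realise a $K_{3,t}$-model — minimum degree $\Omega(D)$ on enough vertices suffices to find $t = \Omega(D)$ common ``neighbour blobs'' for a fixed triple. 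Either way, one extracts $t \ge \alpha D - \beta$.

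Combining, $\frac{t-2}{2} \le \ceil{\frac{t-2}{2}} = \eg^*(K_{3,t}) \le \eg^*(G) \le g$, so $t \le 2g + 2$, and then $D \le (t+\beta)/\alpha \le (2g + 2 + \beta)/\alpha$. The constant $3152$ in the statement is clearly a crude absolute bound absorbing the $\beta$ and $1/\alpha$ factors coming from whichever minor-forcing result is invoked (the paper explicitly says it uses non-optimised constants), and the coefficient of $g$ being exactly $1$ confirms that the $g$-dependence must come through the sharp identity $\eg^*(K_{3,t}) = \ceil{(t-2)/2}$ rather than through a lossier bound — i.e.\ one should split the density into a ``$g$-free'' constant part plus the linear term $2g$ from the $K_{3,t}$ threshold, arranging the extraction of $K_{3,t}$ so that each unit increase in $g$ buys two units of $t$ and hence the claimed slope $1$ in the density.

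The main obstacle I expect is getting the \emph{slope exactly $1$} rather than some larger constant times $g$: a black-box ``average degree $d$ forces $K_{3,t}$ with $t = \Theta(d)$'' generically gives density $\le O(g) + O(1)$ with an unspecified constant on $g$, but here the coefficient is pinned at $1$. This forces the argument to be run the other way: one must show that if the density exceeds a fixed absolute threshold $c_0$ (the $3152$-ish constant), then already $K_{3,3}\minorof G$ — wait, that only gives genus $\ge 1$ — so more carefully, one must show that density exceeding $c_0 + 2j$ forces $K_{3,\,2j+c_1}\minorof G$ for every integer $j\ge 0$, i.e.\ each extra $+2$ in density buys $+2$ in the second index of the bipartite minor and hence $+1$ in $\eg^*$. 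Establishing this ``linear trade-off with the right slope'' is the delicate part; the rest (the appeal to \cref{MinorClosed} and \eqref{K3t}, and bookkeeping the crude additive constant) is routine. It is plausible the authors instead cite a known precise result bounding density in terms of the number of vertices in a forced $K_{3,t}$-topological-minor with leading constant $2$, which would make the slope-$1$ conclusion immediate; I would look for such a lemma and, failing that, prove the trade-off by the greedy branch-set construction sketched above, being careful that the branch sets realising the ``$3$'' side and the ``$t$'' side are vertex-disjoint and connected.
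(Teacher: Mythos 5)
Your approach is essentially the paper's: deduce from \cref{MinorClosed} and \eqref{K3t} that a graph with $\eg^*(G)=g$ is $K_{3,t}$-minor-free once $\ceil{(t-2)/2}>g$, i.e.\ for $t=2g+3$, and then invoke a sharp extremal bound for $K_{3,t}$-minor-free graphs. The one ingredient you leave to a literature search is exactly what the paper cites: \citet{KP10} proved that for $t\geq 6300$ every $K_{3,t}$-minor-free graph has density at most $\frac12(t+3)$; taking $t:=\max\{2g+3,6300\}$ gives density at most $\max\{g+3,\,3151.5\}\leq g+3152$, which is where both the slope $1$ and the constant $3152$ come from. Be aware that your proposed fallback (Kostochka--Thomason clique-minor bounds, or a greedy branch-set construction) would only yield density $\leq Cg+O(1)$ for some unspecified $C>1$, so it would prove a qualitatively similar but strictly weaker statement; the citation is essential for the stated bound, not merely a convenience. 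You also correctly run the argument in the contrapositive direction and correctly identify that the coefficient $1$ on $g$ is forced by the identity $\eg^*(K_{3,t})=\ceil{(t-2)/2}$ combined with the leading constant $\tfrac12$ in the extremal bound.
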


\begin{proof}
\citet{KP10} showed that for $t\geq 6300$ every $K_{3,t}$-minor-free graph has density at most $\frac12(t+3)$. Let $g:=\eg^*(G)$ and $t:=\max\{2g+3,6300\}$. By \eqref{K3t}, $\eg^*(K_{3,t}) \geq \eg(K_{3,2g+3})=\ceil{\frac{2g+3-2}{2}}=g+1$. By \cref{MinorClosed}, $G$ is $K_{3,t}$-minor-free. Hence, $G$ has density at most $\frac12(t+3)\leq g+3152$.
\end{proof}

The proof of \cref{ExtremalGapCover} generalises to show the following result, where the graph $H$ in the proof satisfies $\eg^*(H)\leq g$ instead of being planar, and we use \cref{DensityBlahEulerGenus} instead of the Hanani--Tutte Theorem.

\begin{lem}
\label{ExtremalGapCoverSurface}
Every $(g,k)$-gap-cover-planar graph has density at most $3(g+3152)(k+1)$. 
\end{lem}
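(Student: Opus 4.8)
The plan is to follow the probabilistic argument of \cref{ExtremalGapCover} essentially verbatim, replacing only the final ``$H$ is planar $\Rightarrow$ density at most $3$'' step by the surface analogue. So let $D$ be a $k$-gap-cover-planar drawing of $G$ in a surface of Euler genus at most $g$, let $B$ be a bearing with $B$-covers $A_{vw}$ of size at most $k$, and sample each vertex independently with probability $p=\tfrac{1}{k+1}$. Let $H$ be the random subgraph whose vertex set is the set of chosen vertices and whose edge set consists of those $uv\in E(G)$ with $u,v$ chosen and no vertex of $A_{uv}$ chosen. Exactly as before, $\mathbb{E}|V(H)| = pn$ and $\mathbb{E}|E(H)| \ge p^2(1-p)^k m$, and the same contradiction argument shows that in $H$ no two independent edges cross: if $\{e,f\}\in D^\times$ with $e,f\in E(H)$, then WLOG $(e,f)\in B$, so some endpoint $x$ of $f$ lies in $A_e$, but $A_e\cap V(H)=\emptyset$ forces $x\notin V(H)$, contradicting $f\in E(H)$.

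The key new observation is that $H$ inherits the drawing of $G$ on the genus-$g$ surface (by deleting the vertices and edges of $G$ not in $H$), and in this induced drawing no two independent edges cross. Hence $\eg^*(H)\le g$. Now instead of invoking Hanani--Tutte and the planar bound $|E(H)|\le 3|V(H)|$, I apply \cref{DensityBlahEulerGenus}, which gives $|E(H)|\le (g+3152)\,|V(H)|$. Taking expectations, $p^2(1-p)^k m \le \mathbb{E}|E(H)| \le (g+3152)\,\mathbb{E}|V(H)| = (g+3152)\,pn$, so
$$ m \le \frac{g+3152}{p(1-p)^k}\, n = (g+3152)(k+1)\Bigl(1+\tfrac{1}{k}\Bigr)^{k} n < 3(g+3152)(k+1)\, n,$$
using $(1+\tfrac1k)^k < e < 3$ (and handling $k=0$ directly, where $p=1$ and the bound is immediate since then $H=G$ has $\eg^*(G)\le g$, giving density at most $g+3152$). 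This yields the claimed density bound $3(g+3152)(k+1)$.

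There is no serious obstacle here: the entire probabilistic skeleton, including the ``no independent edges cross in $H$'' argument, is identical to \cref{ExtremalGapCover}, and the only genuinely new ingredient — that the property ``no two independent edges cross'' is preserved when passing to a subgraph's induced drawing, so that $\eg^*(H)\le\eg^*(G)$ whenever $H\subseteq G$ — is immediate (and is in any case already implied by \cref{MinorClosed}, since a subgraph is a minor). The one point requiring a word of care is the degenerate case $k=0$, where $p=1$ and the product formula must be read correctly; this is handled in one line as above. I would therefore present the proof as: ``The proof is identical to that of \cref{ExtremalGapCover}, except that the drawing witnessing $k$-gap-cover-planarity lies in a surface of Euler genus at most $g$, so the subgraph $H$ satisfies $\eg^*(H)\le g$ rather than being planar, whence $|E(H)|\le(g+3152)|V(H)|$ by \cref{DensityBlahEulerGenus} in place of the Hanani--Tutte bound; the remaining computation gives $m\le 3(g+3152)(k+1)n$.''
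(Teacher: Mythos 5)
Your proposal is correct and matches the paper's argument exactly: the paper likewise states that the proof of \cref{ExtremalGapCover} generalises verbatim, with the random subgraph $H$ now satisfying $\eg^*(H)\leq g$ and \cref{DensityBlahEulerGenus} replacing the Hanani--Tutte step. Your arithmetic for the final constant (including the $k=0$ check) is also right.
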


\cref{ShallowMinorGapCoverSurface,ExtremalGapCoverSurface} imply:

\begin{thm}
\label{LinearExpansionSurface}
For every $(g,k)$-gap-cover-planar graph $G$, 
 $$\nabla_r(G)\leq   6(g+3152)(k+1)(r+1).$$
\end{thm}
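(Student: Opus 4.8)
The plan is straightforward: combine the shallow-minor reduction for $(g,k)$-gap-cover-planar graphs with the surface density bound, exactly mirroring how \cref{GapCoverPlanarExpansion} follows from \cref{ShallowMinorGapCover} and \cref{ExtremalGapCover}. First I would invoke \cref{ShallowMinorGapCoverSurface}: any $r$-shallow minor $H$ of a $(g,k)$-gap-cover-planar graph $G$ is itself $(g,(2r+1)k)$-gap-cover-planar. Then I would apply \cref{ExtremalGapCoverSurface} with parameter $(2r+1)k$ in place of $k$, which gives that $H$ has density at most $3(g+3152)((2r+1)k+1)$. Since this holds for every $r$-shallow minor $H$ of $G$, we conclude $\nabla_r(G)\leq 3(g+3152)((2r+1)k+1)$.

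The only remaining task is the arithmetic simplification to the stated form $6(g+3152)(k+1)(r+1)$. Here I would observe that $(2r+1)k+1\leq (2r+1)k+(2r+1)=(2r+1)(k+1)\leq 2(r+1)(k+1)$, using $2r+1\leq 2(r+1)$. Multiplying through by $3(g+3152)$ yields the claimed bound. This is routine and needs no care beyond noting both inequalities hold for all integers $r,k\geq 0$.

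There is essentially no obstacle here; the two cited lemmas do all the work, and the theorem is stated as a corollary of them in the text ("\cref{ShallowMinorGapCoverSurface,ExtremalGapCoverSurface} imply"). The only thing to be mildly careful about is that \cref{ExtremalGapCoverSurface}'s constant $3152$ is additive in $g$ and does not interact with the $k$-parameter, so substituting $(2r+1)k$ for $k$ genuinely only changes the $(k+1)$ factor and leaves $g+3152$ untouched — which is what makes the clean linear-in-$(r+1)$ bound possible. I would write the proof in two sentences.

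\begin{proof}
Let $H$ be an $r$-shallow minor of a $(g,k)$-gap-cover-planar graph $G$. By \cref{ShallowMinorGapCoverSurface}, $H$ is $(g,(2r+1)k)$-gap-cover-planar, so by \cref{ExtremalGapCoverSurface}, $H$ has density at most $3(g+3152)((2r+1)k+1)\leq 3(g+3152)(2r+1)(k+1)\leq 6(g+3152)(k+1)(r+1)$, where we used $(2r+1)k+1\leq(2r+1)(k+1)$ and $2r+1\leq 2(r+1)$. Hence $\nabla_r(G)\leq 6(g+3152)(k+1)(r+1)$.
\end{proof}
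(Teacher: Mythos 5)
Your proof is correct and is exactly the argument the paper intends: the theorem is stated as a direct consequence of \cref{ShallowMinorGapCoverSurface} and \cref{ExtremalGapCoverSurface}, and your arithmetic step $3(g+3152)((2r+1)k+1)\leq 3(g+3152)(2r+1)(k+1)\leq 6(g+3152)(k+1)(r+1)$ is the intended (and valid) simplification. Nothing to add.
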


Since every $r$-shallow topological minor is an  $r$-shallow minor, 
\cref{ShallowMinorGapCoverSurface} implies:

\begin{cor}
\label{GapCoverTopoMinorSurface}
Every $r$-shallow topological minor of a $(g,k)$-gap-cover-planar graph is $(g,(2r+1)k)$-gap-cover-planar.
\end{cor}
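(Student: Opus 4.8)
The plan is to mirror the proof of \cref{GapCoverTopoMinor}, which derived the topological-minor statement from the shallow-minor statement by the trivial observation that every $r$-shallow topological minor is an $r$-shallow minor. Here the ambient surface plays no role in that observation, so the same reduction should go through verbatim once we have the surface analogue of \cref{ShallowMinorGapCover} in hand.

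Concretely, first I would recall \cref{ShallowMinorGapCoverSurface}, which states that every $r$-shallow minor of a $(g,k)$-gap-cover-planar graph is $(g,(2r+1)k)$-gap-cover-planar. Then I would invoke the fact, noted just before \cref{GapCoverTopoMinor} in the plane case, that if $H$ is an $r$-shallow topological minor of $G$ then in particular $H$ is an $r$-shallow minor of $G$: a $(\leq 2r)$-subdivision of $H$ being a subgraph of $G$ yields a model of $H$ in $G$ whose branch set for each vertex $v$ is either $\{v\}$ or a path of length at most $r$ (half of a subdivided edge can be attached to one endpoint), hence of radius at most $r$. Composing these two facts immediately gives that every $r$-shallow topological minor of a $(g,k)$-gap-cover-planar graph $G$ is $(g,(2r+1)k)$-gap-cover-planar, which is exactly the claimed \cref{GapCoverTopoMinorSurface}.

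There is essentially no obstacle: the only mild point to be careful about is the bookkeeping in the ``$r$-shallow topological minor implies $r$-shallow minor'' step, namely that a path arising from half of a $(\leq 2r)$-subdivided edge has radius at most $r$ when rooted at its appropriate endpoint, so that the resulting model is genuinely $r$-shallow. This is the same argument used (implicitly) in the plane case for \cref{GapCoverTopoMinor}, and it transfers without change because subdivisions, subgraphs, and models are purely combinatorial notions independent of the surface on which $G$ happens to be drawn. So the proof is a one-line deduction: ``Since every $r$-shallow topological minor is an $r$-shallow minor, the result follows from \cref{ShallowMinorGapCoverSurface}.''
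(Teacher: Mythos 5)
Your proposal is correct and matches the paper exactly: the paper derives \cref{GapCoverTopoMinorSurface} by the one-line observation that every $r$-shallow topological minor is an $r$-shallow minor, combined with \cref{ShallowMinorGapCoverSurface}. Your extra remark justifying why the branch sets arising from half-subdivided edges have radius at most $r$ is a correct (and welcome) elaboration of a step the paper leaves implicit.
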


\cref{ExtremalGapCoverSurface,GapCoverTopoMinorSurface} imply:

\begin{cor}
\label{GapCoverTopoExpansionSurface}
For every $(g,k)$-gap-cover-planar graph $G$, 
$$\widetilde{\nabla}_r(G) < 6(g+3152)(r+1)(k+1).$$
\end{cor}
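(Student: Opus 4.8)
The plan is to simply combine the two corollaries already in hand, exactly as the paper announces. By \cref{GapCoverTopoMinorSurface}, if $H$ is an $r$-shallow topological minor of a $(g,k)$-gap-cover-planar graph $G$, then $H$ is itself $(g,(2r+1)k)$-gap-cover-planar. By \cref{ExtremalGapCoverSurface}, every $(g,k')$-gap-cover-planar graph has density at most $3(g+3152)(k'+1)$. Applying this with $k'=(2r+1)k$ gives that $H$ has density at most $3(g+3152)((2r+1)k+1)$.

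The only remaining point is the final numerical simplification to the stated bound $6(g+3152)(r+1)(k+1)$. Here I would note that $(2r+1)k+1 \leq (2r+2)(k+1) = 2(r+1)(k+1)$, since expanding the right-hand side gives $2(r+1)(k+1) = 2(r+1)k + 2(r+1) = (2r+2)k + 2r + 2 \geq (2r+1)k + 1$ for all integers $r,k\geq 0$. Substituting, $H$ has density at most $3(g+3152)\cdot 2(r+1)(k+1) = 6(g+3152)(r+1)(k+1)$. Taking the maximum over all $r$-shallow topological minors $H$ of $G$ yields $\widetilde{\nabla}_r(G)\leq 6(g+3152)(r+1)(k+1)$, and the strict inequality in the statement follows since, e.g., \cref{ExtremalGapCoverSurface} actually uses the bound $\tfrac12(t+3)\leq g+3152$ with a strict inequality unless $t$ hits the extreme value.

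There is essentially no obstacle here: both inputs are already established, and the argument is a one-line chaining of them plus the elementary inequality $(2r+1)k+1\leq 2(r+1)(k+1)$. The one thing to be careful about is that \cref{GapCoverTopoMinorSurface} is invoked legitimately — it follows from \cref{ShallowMinorGapCoverSurface} because every $r$-shallow topological minor is in particular an $r$-shallow minor (a $(\leq 2r)$-subdivision, viewed as a model, has each branch vertex forming a subgraph of radius $\leq r$) — but since \cref{GapCoverTopoMinorSurface} is stated in the excerpt I may simply cite it. Thus the proof is:

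\begin{proof}
Let $H$ be an $r$-shallow topological minor of $G$. By \cref{GapCoverTopoMinorSurface}, $H$ is $(g,(2r+1)k)$-gap-cover-planar. By \cref{ExtremalGapCoverSurface}, $H$ has density at most $3(g+3152)((2r+1)k+1)$. Since $(2r+1)k+1\leq (2r+2)(k+1)=2(r+1)(k+1)$, the density of $H$ is less than $6(g+3152)(r+1)(k+1)$. Hence $\widetilde{\nabla}_r(G)<6(g+3152)(r+1)(k+1)$.
\end{proof}
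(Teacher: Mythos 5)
Your proof is correct and is exactly the paper's argument: \cref{GapCoverTopoExpansionSurface} is stated in the paper as an immediate consequence of chaining \cref{GapCoverTopoMinorSurface} with \cref{ExtremalGapCoverSurface}, precisely as you do. One small note: the strict inequality already follows from your own arithmetic, since $2(r+1)(k+1)-\big((2r+1)k+1\big)=k+2r+1>0$, so there is no need to appeal to slack inside the proof of \cref{ExtremalGapCoverSurface}.
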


\section{Colouring}
\label{ColouringNumbers}

We now explore applications of our results to graph colouring. First note that \cref{Extremal} implies that every $k$-gap-planar graph $G$ is $16\sqrt{k+1}$-degenerate, and thus $\chi(G)\leq 16\sqrt{k+1}+1$. This bound is within a constant factor of best possible, since any straight-line drawing of $K_n$ in the plane is $\binom{n}{2}$-planar and thus $\binom{n}{2}$-gap-planar. More generally, by \cref{ExtremalSurface}, every $(g,k)$-gap-planar graph $G$ is $2\sqrt{(8k+4)(2g+9)}$-degenerate, and thus $\chi(G)\leq  2\sqrt{(8k+4)(2g+9)}+1$. Even more generally, 
by \cref{ExtremalGapCoverSurface}, every $(g,k)$-gap-cover-planar graph $G$ is $6(g+3152)(k+1)$-degenerate, and thus 
$\chi(G)\leq 6(g+3152)(k+1)+1$.

We now focus on more general types of graph colouring. \citet{KY03} introduced the following definition. For a graph $G$, total order $\preceq$ of $V(G)$, and integer $r\geq 0$, a vertex $w\in V(G)$ is \defn{$r$-reachable} from a vertex $v\in V(G)$ if there is a $vw$-path $P$ in $G$ of length at most $r$, such that $w\preceq v\prec x$ for every internal vertex $x$ in $P$. Let \defn{$\sreach_r(G,\preceq,v)$} be the set of $r$-reachable vertices from $v$. 
For a graph $G$ and integer $r\geq 0$, the (\defn{strong}) \defn{$r$-colouring number $\scol_r(G)$} is the minimum integer such that there is a total order~$\preceq$ of $V(G)$ with $|\sreach_r(G,\preceq,v)|\leq \scol_r(G)$ for every vertex $v$ of $G$. 



Generalised colouring numbers are important because they characterise bounded expansion classes \citep{Zhu09}, they characterise nowhere dense classes \citep{GKRSS18}, and have several algorithmic applications such as the constant-factor approximation algorithm for domination number by \citet{Dvorak13}, and the almost linear-time model-checking algorithm of \citet{GKS17}. Generalised colouring numbers also provide upper bounds on several graph parameters of interest. For example, a proper vertex-colouring of a graph $G$ is \defn{acyclic} if the union of any two colour classes induces a forest; that is, every cycle is assigned at least three colours. The \defn{acyclic chromatic number} $\chi_\text{a}(G)$ of a graph $G$ is the minimum integer $k$ such that $G$ has an acyclic $k$-colouring. Acyclic colourings are qualitatively different from colourings, since every  graph with bounded acyclic chromatic number has bounded average degree. \citet{KY03} proved that every graph $G$ satisfies
\begin{equation}
    \label{ACN}
    \chi_\text{a}(G)\leq \scol_2(G).
\end{equation}
Other examples include game chromatic number \citep{KT94,KY03}, Ramsey numbers \citep{CS93}, oriented chromatic number \citep{KSZ-JGT97}, arrangeability~\citep{CS93}, etc. 


What can be said about $\scol_r(G)$ where $G$ is in one of the graph classes considered in this paper? First note that \citet{HOQRS17} proved that $\scol_r(G) \leq 5r+1$ for every  planar graph $G$, and $\scol_r(G) \leq (4g+5)r + 2g+1$ for every graph $G$ with Euler genus $g$. More generally, \citet{vdHW18} showed that $\scol_r(G)\leq (2r+1)\ltw(G)$ for any graph $G$. \citet{HKW} showed that $k$-cover-planar graphs $G$ have $\ltw(G)\leq f(k)$, implying $\scol_r(G)\leq (2r+1) f(k)$. But the function $f$ here is exponential in $k$, and this method fails even for 1-gap-planar graphs, which have unbounded layered treewidth~\citep{HIMW24}. We obtain reasonable bounds on $\scol_r(G)$ for all the classes of graphs considered in this paper by exploiting a connection with expansion.









\citet{Zhu09} first showed that $\scol_r(G)$ is upper bounded by a function of $\nabla_r(G)$ (or more precisely, by a function of 
$\nabla_{(r-1)/2}(G)$). The best known bounds follow from results of \citet{GKRSS18} (see the survey by \citet{Siebertz25}). In particular, for every integer $r\geq 1$,
\begin{equation}
    \label{scol-nabla}
    \scol_r(G) \leq  (6r)^r \widetilde{\nabla}_{r-1}(G)^{3r}.
\end{equation}

By \cref{LinearTopoExpansion}, for every $k$-gap-planar graph $G$, 
 $$\widetilde{\nabla}_r(G)\leq 8\sqrt{(2r+1)(k+1)}.$$
By \eqref{scol-nabla}, for $r\geq 1$, 
$$\scol_r(G) \leq 
(6r)^r ( 8(2r-1)^{1/2}(k+1)^{1/2} )^{3r}
\leq (8689\, r^{5/2})^r (k+1)^{3r/2}
.$$
In particular,  with $r=2$, 
$$\chi_\text{a}(G)\leq \scol_2(G) \leq O((k+1)^3).$$
More generally, by 
\cref{LinearTopoExpansionSurface}, for every $(g,k)$-gap-planar graph $G$, 
 $$\widetilde{\nabla}_r(G)\leq \sqrt{(2r+1)(8k+4)(2g+9)} .$$
By \eqref{scol-nabla} for $r\geq 1$, 
\begin{align*}
    \scol_r(G) & \leq  (6r)^r ((2r-1)(8k+4)(2g+9))^{3r/2}\\
& \leq (17 r^{5/2} )^{r} (8k+4)^{3r/2}(2g+9)^{3r/2}.
\end{align*}
In particular, with $r=2$, 
$$\chi_\text{a}(G)\leq \scol_2(G) \leq O((k+1)^3(g+1)^3).$$

Finally, consider $(g,k)$-gap-cover-planar graphs $G$, which by \cref{GapCoverTopoExpansionSurface}, satisfy
 $$\widetilde{\nabla}_r(G)\leq 6(g+3152)(r+1)(k+1).$$
By  \eqref{scol-nabla},   for $r\geq 1$, 
 \begin{align*}
    \scol_r(G) 
&    \leq    (6r)^r ( 6(g+3152)(r+1)(k+1) )^{3r}\\
 &   <    (6^4(r+1)^{5/2})^r (g+3152)^{3r}(k+1)^{3r}.    
\end{align*}
In particular, with $r=2$, 
$$\chi_\text{a}(G)\leq \scol_2(G) \leq O((g+1)^6(k+1)^6).$$

\section{Open Problems}

The following open problems arise from this work:

\begin{itemize}
\item Does $\eg^*(H)=\eg(H)$ for every graph $H$? See \citep{FK22,SSSV-Algo96,Schaefer22} for related results.

\item Do $k$-gap-cover-planar graphs on $n$ vertices have treewidth $O_k(\sqrt{n})$? Here is the best known upper bound.

\begin{prop}
    Every $k$-gap-cover-planar graph on $n$ vertics has treewidth $O( k^{3/4} \sqrt{n} \polylog(n))$.
\end{prop}

\begin{proof}
\citet{CC15} showed that any graph $G$ with treewidth $t$ has a topological minor $H$ with maximum degree at most 3 such that $\tw(H)\geq \Omega( t/ \polylog(t))$. Let $G_0$ be the subgraph of $G$ that is a subdivision of $H$. Thus $G_0$ has maximum degree at most 3. Since treewidth is invariant under subdivision, $\tw(G_0)=\tw(H)\geq \Omega( t/ \polylog(t))$. 

Apply this result to a $k$-gap-cover-planar graph $G$ with $n$ vertices and treewidth $t$. So $G$ has a subgraph $G_0$ with maximum degree at most 3, and $\tw(G_0)\geq ct/ \polylog(t)$, for some constant $c>0$. Observe that every subgraph of a $k$-gap-cover-planar graph is $k$-gap-cover-planar, and every $k$-gap-cover-planar graph with maximum degree at most $\Delta$ is $k\Delta$-gap-planar. So $G_0$ is $k$-gap-cover-planar and $3k$-gap-planar. By \cref{Treewidth}, 
$$ct/\polylog(t) \leq \tw(G_0)\leq 21(3k+1)^{3/4}n^{1/2},$$
implying 
$$\tw(G)=t \leq \tfrac{21}{c}(3k+1)^{3/4}n^{1/2} \polylog(t)
\leq \tfrac{21}{c}(3k+1)^{3/4}n^{1/2} \polylog(n),$$
as desired.
\end{proof}

\item This paper shows that $k$-gap-planar graphs (and various extensions) have linear expansion. Still, it is desirable to have a more structural description of $k$-gap-planar graphs (in the spirit of the product structure theorem for $k$-planar graphs~\citep{DMW23,HW24,DHSW24} or for $k$-cover-planar graphs~\citep{HKW}). It is not clear what such a structural description should be. As a guide, one would expect that such a result would lead to positive solutions to the following open problems: Do $k$-gap-planar graphs have queue-number at most some function $f(k)$? Do $k$-gap-planar graphs have non-repetitive chromatic number at most some function $f(k)$? See \citep{HW24,DMW23} for the corresponding results for $k$-planar graphs. 
\end{itemize}


\subsection*{Acknowledgements} Thanks to André Kündgen for helpful discussions about colouring $k$-gap-planar graphs. Thanks to Louis Esperet for helpful discussions about \cref{ER}.
Thanks to Marcus Schaefer for helpful discussions about the first open problem above. 

{\fontsize{10pt}{11pt}\selectfont
\bibliographystyle{DavidNatbibStyle}
\bibliography{DavidBibliography,temp}}

\end{document}